 \def\sqr#1#2{{\vcenter{\vbox{\hrule
        height.#2pt \hbox{\vrule width.#2pt height#1pt \kern#2pt
          \vrule width.#2pt} \hrule height.#2pt}}}} 
\newenvironment{pf-main}{{\sc Proof of Theorem \ref{mainresult}.}\hspace{3mm}}{\qed}
\newcommand{\nc}{\newcommand}
\newtheorem{theorem}{Theorem}[section]
\newtheorem{lemma}{Lemma}[section]
\newtheorem{example}{Example}[section]
\newtheorem{corollary}{Corollary}[section]
\newtheorem{proposition}{Proposition}[section]
\newtheorem{remark}{Remark}
\nc{\cadlag}{c\`{a}dl\`{a}g } \nc{\caglad}{c\`{a}gl\`{a}d }
\nc{\ba}{\begin{array}} \nc{\ea}{\end{array}}
\nc{\be}{\begin{equation}} \nc{\ee}{\end{equation}}
\nc{\bea}{\begin{eqnarray}} \nc{\eea}{\end{eqnarray}}
\nc{\bean}{\begin{eqnarray*}} \nc{\eean}{\end{eqnarray*}}
\nc{\bu}{\bullet} \nc{\nn}{\nonumber} \nc{\cA}{{\mathcal A}}
\nc{\cB}{{\mathcal B}} \nc{\cC}{{\mathcal C}} \nc{\cD}{{\mathcal
D}} \nc{\cL}{{\mathcal L}} \nc{\cN}{{\mathcal
N}}\nc{\bbD}{\mathbb{D}}\nc{\bbZ}{\mathbb{Z}} \nc{\cG}{{\mathcal G}} \nc{\cF}{{\mathcal
F}} \nc{\cS}{{\mathcal S}} \nc{\cR}{{\mathcal
R}}\nc{\cU}{{\mathcal U}} \nc{\cH}{{\mathcal H}}
\nc{\cK}{{\mathcal K}} \nc{\cM}{{\mathcal M}} \nc{\cP}{{\mathcal
P}} \nc{\bbE}{\mathbb{E}} \nc{\bbEQ}{\mathbb{E}^{\mathbb{Q}}}
\nc{\eps}{\varepsilon}\nc{\bbU}{\mathbb{U}}
\nc{\bbEP}{\mathbb{E}_{\mathbb{P}}}\nc{\bbL}{\mathbb{L}}
\nc{\bbP}{\mathbb{P}} \nc{\bbQ}{\mathbb{Q}} \nc{\Om}{\Omega}
\nc{\om}{\omega} \nc{\bbR}{\mathbb{R}} \nc{\bbC}{\mathbb{C}}
\nc{\bfr}{\begin{flushright}} \nc{\efr}{\end{flushright}}
\nc{\dXt}{\Delta X_{t}} \nc{\dXs}{\Delta X_{s}}
\nc{\bs}{\blacksquare} \nc{\dX}{\Delta X} \nc{\dY}{\Delta Y}
\nc{\dnkx}{\left(X(T^{n}_{k})-X(T^{n}_{k-1})\right)}
\nc{\dom}{depth-of-the-market } \nc{\uar}{\uparrow}
\nc{\dar}{\downarrow}\nc{\rar}{\rightarrow}
\nc{\half}{\frac{1}{2}}\nc{\ol}{\overline}
 \nc{\hbE}{\hat{\bbE}}
\nc{\what}{\widehat} \nc{\fhat}{\what{f}}  \nc {\parx}{\frac{\partial}{\partial x}} \nc
\def\rar{\rightarrow}
\def\lar{\leftarrow}
\def\dar{\downarrow}
\nc{\chf}{\mbox{$\mathbf1$}} \nc{\eid}{\stackrel{d}{=}}
\numberwithin{equation}{section}
\begin{document}
\title{On certain integral functionals of squared Bessel processes}
        \author{Umut \c{C}etin}
 \address{London School of Economics and Political Science,
        Department of Statistics, Columbia House, Houghton Street, London WC2A 2AE}
\keywords{Bessel processes, modified Bessel functions, first
passage times, small deviations, Chung's law of iterated logarithm,
non-homogeneous Feller jump process, time reversal, last passage
times, subordinator,  interest rate derivatives.}
\date{\today}
\maketitle
\begin{abstract} For a squared Bessel process, $X$, the Laplace transforms of joint laws of $(U,
  \int_0^{R_y}X_s^p\,ds)$ are studied where $R_y$ is the
  first hitting time of $y$ by $X$ and $U$ is a random variable measurable with respect to the history of $X$ until $R_y$. A subset of these results are
  then used to solve the associated small ball problems for $\int_0^{R_y}X_s^p\,ds$ and determine a
  Chung's law of iterated logarithm.  $\left(\int_0^{R_y}X_s^p\,ds\right)$ is also considered as a purely
  discontinuous increasing Markov process and its infinitesimal
  generator is found. The findings are then used to price a class of
  exotic derivatives on interest rates and determine the asymptotics
  for the prices of some put options that are only slightly in-the-money.
\end{abstract}.

\section{Introduction}
Let $X$ be a squared Bessel process which is the unique strong solution to
\[
dX_t=2 (\nu +1)\,dt +2\sqrt{X_t}\,dB_t,
\]
where $\nu \geq -1$ is a real constant and $B$ is a standard Brownian
motion.  Letting $\delta=2(\nu+1)$, $X$ is  called a
$\delta$-dimensional squared Bessel process. We will denote
such a process with $X_0=z$ by $BESQ^{\delta}(z)$ and $\delta$ and $\nu$ will be
related by $\delta=2(\nu+1)$ throughout the text. In this paper we are interested in the integral functional
\be \label{d:Sigma}
\Sigma^{\delta}_{p,z,y}:=\int_0^{R_y}X_s^p
    \,ds,
\ee
where $p>-1$ and $R_y:=\inf\{t\geq 0: X_t=y\}$ for $y \in [0, \infty)$ and $X$ is $BESQ^{\delta}(z)$. (In the
sequel, we will write $R^{\delta}_y$ only if we need to specify the
dimension to avoid ambiguity.) 

Squared Bessel processes have found wide applications  especially in
Finance Theory, see Chapter 6 in \cite{cjy} for a recent account. They can, e.g.,
be used to model interest rates in a {\em Cox-Ingersoll-Ross} framework. In the above setting, if   $X^p$
models the spot interest rates, then $\exp\left
  (\Sigma^{\delta}_{p,z,y}\right)$ refers to the cumulative interest
until the spot rate hits the barrier $y^p$. As such, this random
variable is related to  certain  {\em exotic options} on interest
rates (see \cite {dl} for some formulae regarding barrier options in a similar framework). Bessel processes also appear often in the study of {\em financial bubbles} since $1/\sqrt{X}$ is the prime example of a continuous (strict) local martingale when $X$ is a $BESQ^3$ (see, e.g., \cite{my}, \cite{sp} and \cite{sp1} for how  strict local martingales, and in particular Bessel processes, appear in mathematical studies of bubbles).

In Section 2 we will determine the joint  law of $(U, \Sigma^{\delta}_{p,z,y})$ by
martingale methods, where $U$ is a random variable measurable with
respect to the evolution of $X$ until  $R_y$. In particular we will obtain the joint
distributions of  $(R_y, \Sigma^{\delta}_{p,z,y})$  and  $(\max_{t\leq
  R_y}X_t, \Sigma^{\delta}_{p,z,y})$. As a by-product of our findings,
if  $\frac{|\nu|}{p+1}=\frac{1}{2}$ , we have a remarkable characterisation
of the conditional law of $\Sigma^{\delta}_{p,z,y}$  given that the maximum (resp. minimum) of $X$ at
$R_y$ is below (resp. above) a fixed level in terms of the first
hitting time distributions of a $3$-dimensional Bessel process when $z \geq y$ (resp.
 $z\leq y$).

We will use the results of Section 2 in order to study {\em small
  ball probabilities} for $\Sigma^{\delta}_{p,z,y}$ in Section 3. Solving the small
ball problem for $\Sigma^{\delta}_{p,z,y}$ amounts to finding the
asymptotic behaviour of $-\log
\mbox{Prob}(\Sigma^{\delta}_{p,z,y}<\eps)$ as $\eps\rar 0$. We will
then use this asymptotic form to determine a {\em law of iterated
  logarithm} for $(\Sigma^{\delta}_{p,0,y})_{y \geq 0}$ as $y \rar
\infty$.

Section 4 will analyse $(\Sigma^{\delta}_{p,0,y})_{y
  \geq 0}$ as a Markov process indexed by $y$ and compute its
infinitesimal generator when $\nu \geq 0$. We will also consider the
process $Z^{\delta}$ which is obtained via a `time reversal' from
$(\Sigma^{\delta}_{p,0,y})_{y \geq 0}$. More precisely, we will find
the generator of $Z^{\delta}$  defined by
\[
Z^{\delta}_x=\int_{L_{1-x}}^{L_1}X_s^p\,ds \qquad \forall x \in[0,1),
\]
where $L_x:=\sup\{t\geq 0:X_t=x\}$. In particular, we will obtain that
$Z^4$ is identical in law to an increasing family of hitting times of
a linear Brownian motion.

Finally, in Section 5 we will apply our findings to the pricing of some
exotic derivatives on interest rates. The small ball probabilities
will be used to find asymptotic behaviour of some put options with
small strikes, the options that are only slightly {\em in-the-money}.
\section{Preliminaries} \label{s:2}
Let $(\Om, \cF, (\cF_t)_{t \geq 0}, \bbP)$ be a stochastic base
where $\cF$ is completed with the $\bbP$-null sets. Let  $X$ be an $\bbR_+$-valued
semimartingale which is the unique strong solution to
\be \label{e:sdeX}
dX_t=2 (\nu +1)\,dt +2\sqrt{X_t}\,dB_t,
\ee
where $\nu \geq -1$ is a real constant, $B$ is a standard Brownian
motion.

Let  $Q^{\delta}_z$ be  the  measure on the path space, i.e. $C([0,\infty), [0,\infty))$, induced by $X$ starting at $z$. It is well-known (see Section 1 of Chapter XI of \cite{ry})
that for $\nu  \geq 0$ the set $\{0\}$ is polar, otherwise it is
reached a.s.. Moreover, the process is transient for $\nu>0$ and recurrent otherwise. We will denote the first hitting time of $0$ for $X$
with $R$. The scale function, $s^{\nu}$, for $BESQ^{\delta}$ is given
by
\[
s^{\nu}(x)=-x^{-\nu}\qquad \mbox{for } \nu>0, \qquad s^0(x)=\log x,
\qquad
s^{\nu}(x)=x^{-\nu}\qquad \mbox{for } \nu \in [-1,0).
\]
We refer the reader to \cite{ry} and \cite{gy} for a comprehensive study of Bessel processes and relevant bibliography.

In subsequent computations we will follow a Feynman-Kac type approach as in, e.g., \cite{jpy}.
\begin{lemma} \label{l:lmart} Let $p> -1, \lambda >0$ and suppose
  that $u \in C^2$, solves the
  following ordinary differential equation (ODE):
\be \label{e:ode}
x^2 y'' + x y' -y [\nu^2 +\lambda x^{2(p+1)}]=0,
\ee
and is strictly positive on $(0, \infty)$.
Then,  $(M^{(u)}_{t \wedge R})_{t \geq 0}$ is a local martingale where
\[
M^{(u)}_t:=u(\sqrt{X_{t}})X_{t}^{-\frac{\nu}{2}}\exp\left(-\frac{\lambda}{2}\int_0^{t} X_s^p\,
  ds\right).
\]
In particular,
\[
\chf_{[t < R]}dM^{(u)}_t=\chf_{[t < R]}M^{(u)}_t\left(\frac{u'(\sqrt{X_t})}{u(\sqrt{X_t})}-\frac{\nu}{\sqrt{X_t}}\right)dB_t.
\]
\end{lemma}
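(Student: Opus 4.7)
The plan is to apply It\^o's formula to $M^{(u)}_t = F(X_t)\exp(-\frac{\lambda}{2}A_t)$ where $F(x):=u(\sqrt{x})x^{-\nu/2}$ and $A_t:=\int_0^t X_s^p\,ds$, working on the open set $\{t<R\}$ where $X_t>0$ and therefore $F\in C^2$ in a neighbourhood of $X_t$. Writing the generator of $X$ as $\cL=2x\parww+2(\nu+1)\parx$ (so $dF(X_t)=\cL F(X_t)\,dt+2\sqrt{X_t}F'(X_t)\,dB_t$ on $\{t<R\}$) and applying the product rule to $F(X_t)e^{-\lambda A_t/2}$ gives
\[
\chf_{[t<R]}dM^{(u)}_t=\chf_{[t<R]}e^{-\lambda A_t/2}\Big\{\big[\cL F(X_t)-\tfrac{\lambda}{2}X_t^{p}F(X_t)\big]dt+2\sqrt{X_t}F'(X_t)\,dB_t\Big\}.
\]
Showing the drift vanishes reduces to the identity $\cL F(x)=\tfrac{\lambda}{2}x^{p}F(x)$, which will turn out to be a mere rewriting of the ODE~\eqref{e:ode}.

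To keep the calculation transparent I would change variables once and for all to $Y_t:=\sqrt{X_t}$. It\^o's formula applied to $\sqrt{\cdot}$ on $\{X_t>0\}$ gives $dY_t=\tfrac{2\nu+1}{2Y_t}\,dt+dB_t$, and in these variables $M^{(u)}_t=\phi(Y_t)\exp(-\tfrac{\lambda}{2}\int_0^t Y_s^{2p}\,ds)$ with $\phi(y)=u(y)y^{-\nu}$. Applying It\^o to $\phi(Y_t)$ and using $d\la Y\ra_t=dt$, the drift of $M^{(u)}$ equals $e^{-\lambda A_t/2}$ times
\[
\tfrac12\phi''(Y_t)+\tfrac{2\nu+1}{2Y_t}\phi'(Y_t)-\tfrac{\lambda}{2}Y_t^{2p}\phi(Y_t).
\]
A short computation gives $\phi'(y)=y^{-\nu}u'(y)-\nu y^{-\nu-1}u(y)$ and $\phi''(y)=y^{-\nu}u''(y)-2\nu y^{-\nu-1}u'(y)+\nu(\nu+1)y^{-\nu-2}u(y)$, and after substitution the bracket becomes $\tfrac12 y^{-\nu-2}\bigl[y^2 u''(y)+y u'(y)-(\nu^2+\lambda y^{2(p+1)})u(y)\bigr]$ evaluated at $y=Y_t$, which is zero by~\eqref{e:ode}. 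Hence the drift is identically zero on $\{t<R\}$ and $M^{(u)}_{\cdot\wedge R}$ is a local martingale.

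For the stochastic-integral representation I just read off the $dB$ coefficient from the same calculation: the diffusion part of $d\phi(Y_t)$ is $\phi'(Y_t)\,dB_t$, so
\[
\chf_{[t<R]}dM^{(u)}_t=\chf_{[t<R]}e^{-\lambda A_t/2}\phi'(Y_t)\,dB_t=\chf_{[t<R]}M^{(u)}_t\frac{\phi'(Y_t)}{\phi(Y_t)}\,dB_t,
\]
and $\phi'(y)/\phi(y)=u'(y)/u(y)-\nu/y$, which gives exactly the stated expression once we substitute back $y=\sqrt{X_t}$. (The division by $\phi(Y_t)$ is legitimate because $u>0$ on $(0,\infty)$ and $Y_t>0$ on $\{t<R\}$.)

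There is no real obstacle, only bookkeeping; the one point worth flagging is that $F(x)=u(\sqrt{x})x^{-\nu/2}$ is in general not $C^2$ at $0$, which is why the statement only asserts the local martingale property for the stopped process $M^{(u)}_{\cdot\wedge R}$ and why all computations above are made on the open set $\{t<R\}\subseteq\{X_t>0\}$. For $\nu\in[-1,0)$, where $\{0\}$ is reached a.s., this restriction is essential; for $\nu\ge 0$, where $\{0\}$ is polar, it is automatic.
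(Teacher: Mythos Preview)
Your proof is correct and follows essentially the same approach as the paper: apply It\^o's formula and observe that the drift vanishes precisely because of the ODE~\eqref{e:ode}. The only cosmetic difference is that the paper works directly with $w(x)=u(\sqrt{x})x^{-\nu/2}$ as a function of the $BESQ$ process $X$ (noting that $w$ solves $2xw''+2(\nu+1)w'-\tfrac{\lambda}{2}x^p w=0$), whereas you pass to $Y_t=\sqrt{X_t}$ and work with $\phi(y)=u(y)y^{-\nu}$; these are the same computation in different coordinates.
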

\begin{proof} If we let $w(x):=u(\sqrt{x})x^{-\frac{\nu}{2}}$, it is
  easily seen that $w$ solves
\be \label{e:odew}
2x w'' + 2(\nu+1) w' -\frac{\lambda}{2}x^p w=0.
\ee
Thus, on $[t <R]$
\[
dM^{(u)}_t= 2 M^{(u)}_t
\frac{w'}{w}(X_t)\sqrt{X_t}dB_t=M^{(u)}_t\left(\frac{u'(\sqrt{X_t})}{u(\sqrt{X_t})}-\frac{\nu}{\sqrt{X_t}}\right)dB_t.
\]
\end{proof}
The solutions to (\ref{e:ode}) can easily be determined via the
modified Bessel functions, $I_{\alpha}$ and $K_{\alpha}$, of the first
and second kind. We next summarise some properties of these functions and we refer the reader to Section 3.7 of \cite{tbf} or Section 9.6.1 of \cite{AS} for further results and proofs.  For $\alpha \geq 0$, $I_\alpha$
(resp. $K_{\alpha}$) is a positive and increasing (resp. decreasing) solution to
\be \label{e:ModBes}
x^2 y'' + x y' -(x^2 + \alpha^2) y=0
\ee
when $x$ is restricted to  $(0,\infty)$. Moreover, $I_{\alpha}$ has  the following series representation:
\be \label{e:mb1def}
I_{\alpha}(x)=\sum_{m=0}^{\infty} \frac{1}{m! \Gamma(m+\alpha
  +1)}\left(\frac{x}{2}\right)^{2 m +\alpha}.
  \ee
The above expansion can be used to define $I_{\alpha}$ for $\alpha <0$ when $\alpha$ is not an integer less than $-1$ since the Gamma function is well defined and finite at non-integer negative values. For $\alpha \in \bbZ \cap (-\infty, 0)$ one can define it by taking limits and it turns out that for such an $\alpha$ $I_{-\alpha}=I_{\alpha}$. It is also  a simple matter to check that $I_{\alpha}$ satisfies (\ref{e:odew}) for $\alpha<0$ as well. Moreover,
\be\label{e:mb2def}
K_{\alpha}(x)=\frac{\pi}{2}\frac{I_{-\alpha}(x)-I_{\alpha}(x)}{\sin(\alpha
  \pi)}.
\ee
The above identity in particular entails $K_{\alpha}=K_{-\alpha}$. 

The asymptotic behaviour of the modified Bessel functions can be described in terms of known functions:
\be \label{e:MBasymp}
\ba{ll}
I_{\alpha}(x)  \sim  \frac{\left(\frac{x}{2}\right)^{\alpha}}{\Gamma(\alpha+1)} \mbox{ as } x \rar 0,\, \alpha \neq -1, -2, \ldots; & I_{\alpha}(x)  \sim \frac{e^x}{\sqrt{2 \pi x}} \mbox{ as } x \rar \infty. \\
K_{\alpha}(x) \sim  \frac{\Gamma(\alpha)}{2}\left(\frac{x}{2}\right)^{-\alpha} \mbox{ as } x \rar 0, \alpha >0; & K_{\alpha}(x)  \sim \sqrt{\frac{\pi}{2 x}}e^{-x} \mbox{ as } x \rar \infty.\\
K_{0}(x) \sim - \log x \mbox{ as } x \rar 0. & 
\ea
\ee
Then,  it is easy to check that the solutions to (\ref{e:ode}) is of
the form
\be
C_1
K_{\frac{|\nu|}{p+1}}\left(\frac{1}{p+1}\sqrt{\lambda}x^{p+1}\right) +
C_2
I_{\frac{|\nu|}{p+1}}\left(\frac{1}{p+1}\sqrt{\lambda}x^{p+1}\right),
\ee
where $C_1$ and $C_2$ are arbitrary constants.
\begin{remark} \label{r:monotone} It is worth to observe some further monotonicity properties regarding modified Bessel functions that will be useful in the sequel. 

First consider $x^{-\nu} K_{\frac{|\nu|}{p+1}}\left(\frac{1}{p+1}\sqrt{\lambda}x^{p+1}\right)$ for $\nu\in \bbR$ and $p>-1$. We will now see that this function is decreasing. Indeed, differentiation with respect to $x$ yields
\[
-\nu x^{-\nu-1} K_{\frac{\nu}{p+1}}\left(\frac{1}{p+1}\sqrt{\lambda}x^{p+1}\right) + \sqrt{\lambda}x^{-\nu+p} K_{\frac{\nu}{p+1}}'\left(\frac{1}{p+1}\sqrt{\lambda}x^{p+1}\right).
\]
When $\nu \geq 0$, utilising the recurrence elation (cf. Section 3.7 in \cite{tbf})
\[
K'_{\alpha}(x)=\frac{\alpha}{x}K_{\alpha}(x)-K_{\alpha+1}(x), \; \alpha \in \bbR,
\]
one can see that the above derivative is negative since $K_{\alpha}$ is positive for all $\alpha \in \bbR$. If $\nu <0$, the other recurrence relation, i.e. 
\[
K'_{\alpha}(x)=-\frac{\alpha}{x}K_{\alpha}(x)-K_{\alpha-1}(x),  \; \alpha \in \bbR,
\]
yields the conclusion. 

Similarly, the function $x^{-\nu} I_{\frac{\nu}{p+1}}\left(\frac{1}{p+1}\sqrt{\lambda}x^{p+1}\right)$, has a positive derivative, i.e.  is increasing, whenever $p>-1$ and $\frac{\nu}{p+1}>-2$. This follows  from the recurrence relation 
\[
I'_{\alpha}(x)=\frac{\alpha}{x}I_{\alpha}(x)+ I_{\alpha+1}(x), \; \alpha \in \bbR,
\]
and the positivity of $I_{\alpha}$ for $\alpha>-1$. Note that $\frac{\nu}{p+1}+1>-1$ under our hypotheses. 
\end{remark}
We now return to determining the joint law of $(U,
\Sigma^{\delta}_{p,z,y})$ for arbitrary positive
$\cF_{R_y}$-measurable random variables $U$, where
$\Sigma^{\delta}_{p,z,y}$ is as
defined in (\ref{d:Sigma}). We will analyse the cases of negative and
positive $\nu$ separately.
\subsection{The case $\nu \in (-1,0)$}
\begin{theorem} \label{t:numinus} Suppose that $\nu \in (-1,0), \, p>-1$ and let
  $u_0(x):=K_{\frac{-\nu}{p+1}}\left(\frac{1}{p+1}\sqrt{\lambda}x^{p+1}\right)$.
  Then, 
  \[
0<\lim_{x \rar 0} u_0(\sqrt{x})x^{-\frac{\nu}{2}}< \infty, \qquad \lim_{x
  \rar \infty} u_0(\sqrt{z})x^{-\frac{\nu}{2}}< \infty.
\]
Consequently, $(M^{(u_0)}_{t \wedge R})_{t \geq 0}$ is a strictly positive bounded
  martingale with
\[
\chf_{[t < R]}dM^{(u_0)}_t=\chf_{[t < R]}M^{(u_0)}_t\left(\frac{u_0'(\sqrt{X_t})}{u_0(\sqrt{X_t})}-\frac{\nu}{\sqrt{X_t}}\right)dB_t.
\]
\end{theorem}
\begin{proof}Note that 
\[
\lim_{x\rar 0} u_0(\sqrt{x})x^{-\frac{\nu}{2}}= \left(\frac{\sqrt{\lambda}}{p+1}\right)^{\frac{\nu}{p+1}}\lim_{x \rar 0} x^{ -\frac{\nu}{p+1}}K_{-\frac{\nu}{p+1}}(x)=\left(\frac{2\sqrt{\lambda}}{p+1}\right)^{\frac{\nu}{p+1}}\frac{\Gamma(-\frac{\nu}{p+1})}{2}
\]
in view of the asymptotic relations from (\ref{e:MBasymp}). Asymptotic behaviour of $K_{-\frac{\nu}{p+1}}$ for large $x$ yields $u_0(\sqrt{x})x^{-\frac{\nu}{2}}$ has a finite limit at infinity, hence the boundedness of $(M^{(u_0)}_{t
  \wedge R})_{t \geq 0}$ 
in view of Lemma \ref{l:lmart}. Strict positivity of $K_{\alpha}$ on $(0, \infty)$ for all $\alpha$ completes the proof of that this martingale is strictly positive. 
\end{proof}
It is  well-known that (see, e.g. Section 2.8 in \cite{py})  for $\nu>-1$
\bea
Q^{\delta}_z
\left[\exp\left(-\frac{\lambda}{2}R_y\right)\right]&=&\frac{z^{-\frac{\nu}{2}}K_{\nu}(\sqrt{\lambda
    z})}{y^{-\frac{\nu}{2}}K_{\nu}(\sqrt{\lambda y})}, \qquad y \leq
z; \label{e:LRyx}\\
&=&\frac{z^{-\frac{\nu}{2}}I_{\nu}(\sqrt{\lambda
    z})}{y^{-\frac{\nu}{2}}I_{\nu}(\sqrt{\lambda y})}, \qquad y \geq
z. \label{e:LRxy}
\eea
The above formulae are still valid when $\nu \geq 0$ and note that $z^{-\frac{\nu}{2}}I_{\nu}(\sqrt{\lambda z})$ (resp. $ z^{-\frac{\nu}{2}}K_{\nu}(\sqrt{\lambda z})$) is increasing (resp. decreasing) in view of Remark \ref{r:monotone}.

Since $R<\infty$, a.s. when  $\nu < 0$,   the following is a straightforward corollary
to the theorem above for $\nu <0$ and $y < z$.
\begin{corollary} \label{c:jointl} Let $u_0$ be the function defined in Theorem
  \ref{t:numinus} and suppose that $\nu \in (-1,0), \, p>-1$ and $y < z$. If $U$ is
  $\cF_{R_y}$-measurable, then for $r \geq 0$,
\[
Q^{\delta}_z \left[\exp\left(-r U -\frac{\lambda}{2}
    \Sigma^{\delta}_{p,z,y}\right)\right]=\frac{u_0(\sqrt{z})}{u_0(\sqrt{y})}\left(\frac{z}{y}\right)^{-\frac{\nu}{2}}
P^{\delta, u_0}_z\left[\exp\left(-r U\right)\right],
\]
where $P^{\delta, u_0}_z$ is defined by $\frac{dP^{\delta, u_0
  }_z}{dQ^{\delta}_z}=M^{(u_0)}_{R}$. Moreover, under $P^{\delta,
  u_0}_z$, $X$ solves
\be \label{e:Xamc}
dX_t=2\left(\frac{u_0'(\sqrt{X_t})\sqrt{X_t}}{u_0(\sqrt{X_t})}+1\right)\,dt
+2 \sqrt{X_t}\,d\beta_t, \qquad t \leq R,
\ee
for some $P^{\delta, u_0}_z$-Brownian motion $\beta$.
\end{corollary}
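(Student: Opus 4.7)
The plan is to interpret the statement $dP^{\delta,u_0}_x / dQ^{\delta}_x = M^{(u_0)}_R$ as specifying a probability measure whose density on $\cF_{R_y}$ is $M^{(u_0)}_{R_y}/M^{(u_0)}_0$, and to evaluate the joint Laplace transform by absorbing $\exp(-\frac{\lambda}{2}\Sigma^{\delta}_{p,x,y})$ into this density. The preliminaries are that when $y \leq x$ and $\nu < 0$ the hitting time $R_y$ is almost surely finite under $Q^{\delta}_x$ (since the process hits $0$ a.s., so by path continuity it must first cross every intermediate level), and that $X_{t} \geq y > 0$ on $[0,R_y]$, which keeps the $1/\sqrt{X_t}$ in the stochastic logarithm of $M^{(u_0)}$ bounded. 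Theorem \ref{t:numinus} gives that $M^{(u_0)}_{t \wedge R}$ is a bounded martingale, so optional stopping at $R_y \leq R$ yields $Q^{\delta}_x[M^{(u_0)}_{R_y}] = M^{(u_0)}_0 = u_0(\sqrt{x})\,x^{-\nu/2}$.

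The key identity is obtained by evaluating $M^{(u_0)}$ at $R_y$: since $X_{R_y} = y$ by continuity,
\[
M^{(u_0)}_{R_y} = u_0(\sqrt{y})\,y^{-\nu/2}\,\exp\!\left(-\tfrac{\lambda}{2}\Sigma^{\delta}_{p,x,y}\right).
\]
Solving for the exponential factor and using $\cF_{R_y}$-measurability of $U$, the change of measure then gives
\begin{align*}
Q^{\delta}_x\!\left[e^{-rU-\tfrac{\lambda}{2}\Sigma^{\delta}_{p,x,y}}\right]
&=\frac{1}{u_0(\sqrt{y})\,y^{-\nu/2}}\,Q^{\delta}_x\!\left[M^{(u_0)}_{R_y}\,e^{-rU}\right]\\
&=\frac{u_0(\sqrt{x})}{u_0(\sqrt{y})}\!\left(\frac{x}{y}\right)^{-\nu/2}\!P^{\delta,u_0}_x[e^{-rU}],
\end{align*}
which is the claimed formula.

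For the SDE under $P^{\delta,u_0}_x$, Girsanov's theorem applied to the stochastic logarithm of $M^{(u_0)}$ given in Theorem \ref{t:numinus} shows that
\[
B_t = \beta_t + \int_0^t \!\left(\frac{u_0'(\sqrt{X_s})}{u_0(\sqrt{X_s})}-\frac{\nu}{\sqrt{X_s}}\right)ds \qquad (t \leq R_y)
\]
defines a $P^{\delta,u_0}_x$-Brownian motion $\beta$. Substituting into (\ref{e:sdeX}), the $-2\nu\,dt$ arising from the $-\nu/\sqrt{X_t}$ piece cancels against the $+2\nu\,dt$ inside the original drift $2(\nu+1)\,dt$, producing exactly (\ref{e:Xamc}). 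The only technical wrinkle is routine bookkeeping: the Girsanov drift is well defined on $[0,R_y]$ thanks to $X_t \geq y > 0$ there, and the boundedness of $M^{(u_0)}_{\cdot \wedge R}$ trivially justifies both the optional stopping and the measure change; nothing deeper is required beyond the ingredients already supplied by Lemma \ref{l:lmart} and Theorem \ref{t:numinus}.
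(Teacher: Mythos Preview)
Your proof is correct and is exactly the argument the paper has in mind when it declares the result ``a straightforward corollary'' of Theorem~\ref{t:numinus}: evaluate the bounded martingale $M^{(u_0)}$ at $R_y\le R$ via optional stopping, read off the exponential factor from $M^{(u_0)}_{R_y}=u_0(\sqrt{y})\,y^{-\nu/2}\exp\!\big(-\tfrac{\lambda}{2}\Sigma^{\delta}_{p,x,y}\big)$, and apply Girsanov to the stochastic logarithm computed in Lemma~\ref{l:lmart}. The only cosmetic point is that the paper states the SDE~(\ref{e:Xamc}) for $t\le R$ rather than $t\le R_y$; your argument extends verbatim since $X_t>0$ on $[0,R)$, so the bound $X_t\ge y$ is not actually needed to keep the Girsanov drift finite.
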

\begin{proof}
Since $(M^{(u_0)}_{t \wedge R})_{t \geq 0}$ is a strictly positive martingale due to Theorem \ref{t:numinus}, $Q^{\delta}_z$ is an equivalent probability measure. The fact that $X$ solves (\ref{e:Xamc}) follows from an application of Girsanov's theorem. 
\end{proof}
\begin{remark} By taking $U \equiv 0$ Corollary \ref{c:jointl} yields the law of $\Sigma^{\delta}_{p,z,y}$ for $y < z$. Comparing this Laplace transform with (\ref{e:LRyx}) shows that $\Sigma^{\delta}_{p,z,y}\eid R^{\delta^{\ast}}_{y^{\ast}}$ where $\delta^{\ast}=2(\frac{\nu}{1+p}+1), \, y^{\ast}=\frac{y^{1+p}}{(1+p)^2}$ and $R^{\delta^{\ast}}_{y^{\ast}}$ is the first hitting time of $y^{\ast}$ for some $BESQ^{\delta^{\ast}}\left(\frac{z^{1+p}}{(1+p)^2}\right)$. One can check by comparing the Laplace transforms that we will obtain later in this section that  this equality in law would be valid for $y\geq z$, too. Moreover, the same identities in distribution will hold for $\nu \geq 0$, too. These facts also follow from the time-change result given in  Proposition XI.1.11 in \cite{ry}.
\end{remark}
Recall that if $X$ is a one-dimensional regular diffusion on an interval $(l_0,l_1)$ defined by its infinitesimal generator, $\cA$, where
\[
\cA= \half \sigma^2(x)\frac{d^2}{dx^2} + b(x) \frac{d}{dx}
\]
for some locally bounded functions $\sigma$ and $b$ such that $\sigma>0$ on the open interval $(l_0,l_1)$, then for any $r>0$ there exists  a  positive, and strictly decreasing (resp. increasing) function $\Phi$ (resp. $\Psi$), which solves
\[
\cA u= r \Phi \; (\mbox{resp. } r \Psi)
\]
on $(l_0, l_1)$ satisfying certain boundary requirements depending on the nature of the behaviour of the diffusion  near $l_0$ and $l_1$. Moreover, any other solution of this equation with the above positivity and monotonicity assumptions is a fixed multiple of $\Phi$ (resp. $\Psi$) (see, e.g., Proposition V.50.3 in \cite{rw}). Then,  if $R_y$ is the first hitting time of $y$, 
\bean
P_z\left[\exp\left(- r R_y\right)\right]&=&\frac{\Phi(z)}{\Phi(y)}, \qquad y \leq
z; \\
&=&\frac{\Psi(z)}{\Psi(y)}, \qquad y \geq
z, 
\eean
where $P_z$ is the law of the diffusion that started at $z$ at $t=0$.

Thus,  if $U=R_y$, we obtain the following in view of the above discussion.
\begin{corollary} \label{c:ode-sdeK} Let $u_0$ be the function defined in Theorem
  \ref{t:numinus} and suppose that $\nu \in (-1,0)$ and $y < z$. Then for $r \geq 0$,
\[
Q^{\delta}_z\left[\exp\left(-r R_y -\frac{\lambda}{2}
    \Sigma^{\delta}_{p,z,y}\right)\right]=\frac{u_0(\sqrt{z})}{u_0(\sqrt{y})}\left(\frac{z}{y}\right)^{-\frac{\nu}{2}}
\frac{\Phi(z)}{\Phi(y)},
\]
where $\Phi$ is a positive and decreasing solution of
\be \label{e:Phi}
2x v'' + 2
\left(\frac{u_0'(\sqrt{x})\sqrt{x}}{u_0(\sqrt{x})}+1\right)v'=rv.
\ee
\end{corollary}
\begin{proof}
The ODE in (\ref{e:Phi}) corresponds to the diffusion, $X$, which follows
\be \label{e:sdec21}
dX_t=2\left(\frac{u_0'(\sqrt{X_t})\sqrt{X_t}}{u_0(\sqrt{X_t})}+1\right)\,dt
+2 \sqrt{X_t}\,d\beta_t,
\ee
where $\beta$ is a Brownian motion, if a solution exists. Using the recursive relation 
\[
K'_{\alpha}(x)=-\frac{\alpha}{x}K_{\alpha}(x)-K_{\alpha-1}(x),
\]
we obtain
\[
\frac{u_0'(\sqrt{x})\sqrt{x}}{u_0(\sqrt{x})}=\nu -\sqrt{\lambda}x^{\frac{p+1}{2}}\frac{K_{\frac{-\nu-p-1}{p+1}}\left(\frac{1}{p+1}\sqrt{\lambda}x^{\frac{p+1}{2}}\right)}{K_{\frac{-\nu}{p+1}}\left(\frac{1}{p+1}\sqrt{\lambda}x^{\frac{p+1}{2}}\right)} \leq \nu.
\]
This shows that the drift of the above SDE is less than that of the SDE solved by a $BESQ^{\delta}$. Thus, when a solution exists, it never explodes due to the standard comparison results for SDEs.  Moreover, using the asymptotic relations for $K$ as $x$ tends to $0$, we can directly verify that 
\[
\lim_{x\rar 0} \frac{u_0'(\sqrt{x})\sqrt{x}}{u_0(\sqrt{x})}=\nu.
\]
Thus, the solution is pushed towards the interior of $(0,\infty)$ as soon as it hits $0$ since the diffusion coefficient vanishes at $0$. Finally, the existence of a solution taking values in $[0, \infty)$ follows easily since the drift term is locally Lipschitz as $K_{\alpha}$ is strictly positive on $[0, \infty)$ and $K_{\alpha}$ is twice continuously differentiable for all $\alpha$. The solution is also unique due to drift coefficient being locally Lipschitz.

Recall that in view of Corollary \ref{c:jointl}  $X$ solves (\ref{e:sdec21}) until the first time it hits $0$. Since it cannot hit $0$ before hitting $y$, the formula is a direct consequence of the discussion preceding the corollary.
\end{proof}
Corollary \ref{c:jointl} also allows us to compute the law of
$\Sigma^{\delta}_{p,z,y}$ on the event that a certain boundary is yet
to be reached via the intimate relationship between the scale functions of diffusions and their exit probabilities (see Sect.3 of Chap. VII in \cite{ry} for details).
\begin{corollary} \label{c:jmS} Let $u_0$ be the function defined in Theorem
  \ref{t:numinus} and suppose that $\nu \in (-1,0), \, p>-1$ and $y <z$. Then, a
  scale function of the diffusion defined by (\ref{e:Xamc}) is
\be \label{e:stilde0}
\tilde{s_0}(x)= \int_1^x\frac{1}{y u_0^2(\sqrt{y})}dy, \qquad x\geq 0.
\ee
Thus, for any $a >x$
\[
Q^{\delta}_z\left[\chf_{[R_a >R_y]}\exp\left( -\frac{\lambda}{2}
    \Sigma^{\delta}_{p,z,y}\right)\right]= \frac{u_0(\sqrt{x})}{u_0(\sqrt{y})}\left(\frac{x}{y}\right)^{-\frac{\nu}{2}} \frac{\tilde{s_0}(x)-\tilde{s_0}(a)}{\tilde{s_0}(y)-\tilde{s_0}(a)}.
\]
\end{corollary}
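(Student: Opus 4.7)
The plan is to combine the Radon--Nikodym identity underlying Corollary~\ref{c:jointl}, now applied to an indicator rather than an exponential weight, with the classical scale-function formula for one-dimensional hitting probabilities under the diffusion~(\ref{e:Xamc}). Since $M^{(u_0)}_{\cdot\wedge R}$ is a bounded martingale (Theorem~\ref{t:numinus}) and $R_y<R$ almost surely under $Q^{\delta}_x$ (because $\nu<0$ forces $0$ to be reached while continuity and $y>0$ force $X$ to visit $y$ strictly before $0$), optional sampling of $M^{(u_0)}$ at $R_y$ together with $X_{R_y}=y$ extends the change-of-measure identity, for any nonnegative $\cF_{R_y}$-measurable $V$, to
\[
Q^{\delta}_x\!\left[V\exp\!\left(-\frac{\lambda}{2}\Sigma^{\delta}_{p,x,y}\right)\right]=\frac{u_0(\sqrt{x})}{u_0(\sqrt{y})}\left(\frac{x}{y}\right)^{-\nu/2}P^{\delta,u_0}_x[V].
\]
Taking $V=\chf_{[R_a>R_y]}$ reduces the corollary to the evaluation of $P^{\delta,u_0}_x[R_a>R_y]$ for the diffusion~(\ref{e:Xamc}).

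Next, I would verify~(\ref{e:stilde0}) via the standard recipe $s'(x)=\exp\!\left(-\int^x 2b(z)/\sigma^2(z)\,dz\right)$. With $b(x)=2\bigl(\frac{u_0'(\sqrt{x})\sqrt{x}}{u_0(\sqrt{x})}+1\bigr)$ and $\sigma^2(x)=4x$, the integrand simplifies to $\frac{u_0'(\sqrt{z})}{\sqrt{z}\,u_0(\sqrt{z})}+\frac{1}{z}$; the substitution $v=\sqrt{z}$ shows the antiderivative equals $2\log u_0(\sqrt{z})+\log z$, so that $s'(x)\propto 1/(x\,u_0^2(\sqrt{x}))$, and integration from $1$ to $x$ produces $\tilde{s_0}$.

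Finally, by It\^o's formula $\tilde{s_0}(X_t)$ is a local martingale under $P^{\delta,u_0}_x$, and is bounded on the stochastic interval $[0,R_y\wedge R_a]$ since $\tilde{s_0}$ is continuous on $[y,a]$. Optional sampling at $R_y\wedge R_a$ then delivers the classical hitting-probability identity
\[
P^{\delta,u_0}_x[R_y<R_a]=\frac{\tilde{s_0}(a)-\tilde{s_0}(x)}{\tilde{s_0}(a)-\tilde{s_0}(y)}=\frac{\tilde{s_0}(x)-\tilde{s_0}(a)}{\tilde{s_0}(y)-\tilde{s_0}(a)},
\]
which combined with the first step yields the stated formula. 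The only genuinely delicate point is ensuring that $R_y\wedge R_a<\infty$ almost surely under $P^{\delta,u_0}_x$ so that optional sampling applies cleanly; this follows from regularity of the diffusion on the compact subinterval $[y,a]\subset(0,\infty)$, itself a consequence of the non-degeneracy $\sigma^2(x)=4x>0$ there and the asymptotics of $K_{|\nu|/(p+1)}$ that keep $u_0(\sqrt{\cdot})$ strictly positive on $[y,a]$. Everything else is routine.
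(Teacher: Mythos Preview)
Your argument is correct and follows essentially the same route as the paper: invoke the change of measure from Corollary~\ref{c:jointl} with the indicator $\chf_{[R_a>R_y]}$, identify the scale function of~(\ref{e:Xamc}) via the standard formula (the paper simply cites Exercise~VII.3.20 of \cite{ry} where you compute $s'$ explicitly), and then use the defining property of scale functions for two-sided exit probabilities (the paper cites Definition~VII.3.3 of \cite{ry} where you spell out the optional-sampling step). The one point you omit, which the paper does address, is that $\tilde{s_0}(x)$ is finite at $x=0$ as claimed in the statement: since $u_0(\sqrt{y})y^{-\nu/2}$ has a finite positive limit at $0$ by~(\ref{e:intK3}), one has $1/(y\,u_0^2(\sqrt{y}))\leq C y^{-1-\nu}$ near $0$, which is integrable for $\nu\in[-1,0)$.
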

\begin{proof} The representation of the scale function is due to
  the well-known formulas for the solutions of SDEs, see, e.g.,
  Exercise VII.3.20 in \cite{ry}.  Note that the function is
  well-defined at $x=0$. Indeed, it follows from Theorem \ref{t:numinus} that $\lim_{y\rar
    0}u_0(\sqrt{y})y^{-\frac{\nu}{2}}>0$. Thus, $ y
  u_0^2(\sqrt{y}) > C y^{1+\nu}$ for some $C>0$ for sufficiently small $y$. Since $y^{-(1+\nu)}$
  is integrable for $\nu \in [-1,0)$, the claim holds.

 The second assertion follows from
  Corollary \ref{c:jointl} after taking $r=1$ and $U=\log  \chf_{[R_a
    >R_y]}$  via the defining property of scale functions, see
  Definition VII.3.3  in \cite{ry}.
\end{proof}
\begin{remark} The above result in fact gives us the joint law of
  $(\max_{t \leq R_y}X_t, \Sigma^{\delta}_{p,z,y})$. Indeed, for any
  $a \geq z > y$
\[
[\max_{t \leq R_y}X_t< a]=[R_a >R_y].
\]
\end{remark}

Since $K_{\frac{1}{2}}(x)=\sqrt{\frac{\pi}{2x}}e^{-x}$, we have more
  explicit formulas when $\frac{\nu}{p+1}=-\frac{1}{2}$.

\begin{corollary} \label{c:bht-} Suppose that $\frac{\nu}{p+1}=-\frac{1}{2}$ and $p>-1$. Then, for
  $y \leq z$ we have the following:
\begin{itemize}
\item[i)] \[
Q^{\delta}_z\left[\exp\left(-r R_y -\frac{\lambda}{2}
    \Sigma^{\delta}_{p,z,y}\right)\right]=\exp\left(\sqrt{\lambda}\frac{z^{-\nu}-y^{-\nu}}{2 \nu}\right)
\frac{\Phi(z)}{\Phi(y)},
\]
where $\Phi$ is a positive and decreasing solution of
\be \label{e:Phi0}
2x v'' + 2
\left(\nu +1 -\sqrt{\lambda} x^{-\nu}\right)v'=rv
\ee
on $(0, \infty)$.
\item[ii)] The function $\tilde{s_0}$ is, up to an affine transformation, given by $\exp\left(-\sqrt{\lambda}\frac{x^{-\nu}}{\nu}\right)$.
\item[iii)] For $a >z$,
\bea
Q^{\delta}_z\left[\chf_{[R_a >R_y]}\exp\left( -\frac{\lambda}{2}
    \Sigma^{\delta}_{p,z,y}\right)\right]&=& \exp\left(\sqrt{\lambda}\frac{z^{-\nu}-y^{-\nu}}{2 \nu}\right)\frac{\exp\left(-\sqrt{\lambda}\frac{z^{-\nu}}{ \nu}\right)-\exp\left(-\sqrt{\lambda}\frac{a^{-\nu}}{ \nu}\right)}{\exp\left(-\sqrt{\lambda}\frac{y^{-\nu}}{ \nu}\right)-\exp\left(-\sqrt{\lambda}\frac{a^{-\nu}}{ \nu}\right)}\nn
    \\
    &=&\frac{\sinh\left(-\sqrt{\lambda}\frac{a^{-\nu}-z^{-\nu}}{
          2\nu}\right)}{\sinh\left(-\sqrt{\lambda}\frac{a^{-\nu}-y^{-\nu}}{
          2\nu}\right)}. \label{e:lM<a}
\eea
\end{itemize}
\end{corollary}
Note that the expression in (\ref{e:lM<a}) yields
\[
Q^{\delta}_z\left[\exp\left( -\frac{\lambda}{2}
    \Sigma^{\delta}_{p,z,y}\right)\bigg| R_a >R_y\right]=\frac{a^{-\nu} -y^{-\nu}}{a^{-\nu} -z^{-\nu}}\frac{\sinh\left(-\sqrt{\lambda}\frac{a^{-\nu}-z^{-\nu}}{
          2\nu}\right)}{\sinh\left(-\sqrt{\lambda}\frac{a^{-\nu}-y^{-\nu}}{
          2\nu}\right)}
\]
using the scale function of $X$ under $Q^{\delta}_z$. Comparing this
with  (\ref{e:LRxy}) for $\nu=1/2$ gives the following since $I_{\half}(x)=\sqrt{\frac{2\pi}{x}}\sinh(x)$.
\begin{corollary} \label{c:sigcl-} Suppose that $\frac{\nu}{p+1}=-\frac{1}{2}$, $\nu \in (-1,0)$, and $p>-1$. Then, for
  $y \leq z <a$ we have that the law of $\Sigma^{\delta}_{p,z,y}$
  conditioned on the event $[R_a>R_y]$ is that of  the first hitting
  time of $(a^{-\nu}-y^{-\nu})^2/4\nu^2$ by a $3$-dimensional squared
  Bessel process started at $(a^{-\nu}-z^{-\nu})^2/4\nu^2$.
\end{corollary}
Note that, since $\lim_{a \rar 0}=\frac{1-x^a}{a}=-\log x$, when $y>0$, we obtain that the above conditional laws converge as $\nu \rar 0$ (and, thus, as $p \rar -1$) to that of the first hitting time of $(\log\sqrt{a}-\log\sqrt{y})^2$ by a $3$-dimensional squared Bessel process started at $(\log\sqrt{a}-\log\sqrt{z})^2$. This can be viewed as the analogous statement of the above corollary when $\nu=0$ and $p=-1$.

  Next we look at the case when $y> z \geq 0$. Observe that the function $u_1$ as defined in the theorem below is still well defined and finite at $z=0$ in view of, e.g.,  the series representation of $I_{\alpha}$ in (\ref{e:mb1def}).  \begin{theorem} \label{t:numinus2}  Suppose  that  $p \geq 0$, $y\geq z$, and $\nu \in (-1,0)$. Let
  $u_1(x):=I_{\frac{\nu}{p+1}}\left(\frac{\sqrt{\lambda}}{p+1}
    x^{p+1}\right)$. Then,  $(M^{(u_1)}_{t \wedge R_y})_{t \geq 0}$ is a bounded
  martingale with
\[
\chf_{[t < R_y]}dM^{(u_1)}_t=\chf_{[t < R_y]}M^{(u_1)}_t\left(\frac{u_1'(\sqrt{X_t})}{u_1(\sqrt{X_t})}-\frac{\nu}{\sqrt{X_t}}\right)dB_t,
\]
for any $ y \geq 0$.
\end{theorem}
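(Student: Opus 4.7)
The approach is to follow the template of Theorem \ref{t:numinus}, with $I_{\nu/(p+1)}$ replacing $K_{-\nu/(p+1)}$ and the role of the barrier swapped (we now stop at an upper level $y\geq x$). First I would observe that, under either hypothesis, the index $\alpha:=\nu/(p+1)$ lies in $(-1,0)$: for $-1<\nu<0$ and $p\geq 0$ this is immediate, while for $\nu=-1$ and $p>0$ one has $\alpha=-1/(p+1)\in(-1,0)$. In this range $\Gamma(\alpha+1)>0$, so every term of the series (\ref{e:mbdef}) for $I_\alpha$ is strictly positive and $u_1>0$ on $(0,\infty)$. Since $I_\alpha$ satisfies (\ref{e:ModBes}) for any real $\alpha$, the change of variable used in Lemma \ref{l:lmart} exhibits $u_1$ as a strictly positive $C^2$ solution of (\ref{e:ode}), so Lemma \ref{l:lmart} already provides the local-martingale property on $[0,R)$ together with the claimed stochastic differential on $[t<R]$.

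The main step is then boundedness. Substituting (\ref{e:mbdef}) into $h(x):=u_1(\sqrt{x})\,x^{-\nu/2}$ and collecting powers of $x$ yields
\[
h(x)=\sum_{m\geq 0}\frac{\bigl(\sqrt{\lambda}/(2(p+1))\bigr)^{2m+\alpha}}{m!\,\Gamma(m+\alpha+1)}\,x^{m(p+1)},
\]
a convergent power series in $x^{p+1}$ with strictly positive leading term $\bigl(\sqrt{\lambda}/(2(p+1))\bigr)^{\alpha}/\Gamma(\alpha+1)$; hence $h$ extends continuously and positively to $[0,\infty)$ and is bounded on $[0,y]$. Since Theorem \ref{t:numinus2} is used in the regime $x\leq y$ introduced just before its statement, the stopped state $X_{t\wedge R_y}$ lives in $[0,y]$ and therefore $h(X_{t\wedge R_y})\leq\sup_{[0,y]}h<\infty$. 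Combined with $\exp\bigl(-\frac{\lambda}{2}\int_0^t X_s^p\,ds\bigr)\leq 1$ (valid because $p\geq 0$), this furnishes the uniform bound $M^{(u_1)}_{t\wedge R_y}\leq\sup_{[0,y]}h$.

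The remaining subtlety is that, for $-1<\nu<0$, the process may reflect off $0$ before hitting $y$, so $R<R_y$ is possible while Lemma \ref{l:lmart} only certifies the local-martingale property up to $R$. A short calculation from $u_1(\sqrt{x})=h(x)\,x^{\nu/2}$ gives
\[
\frac{u_1'(\sqrt{x})}{u_1(\sqrt{x})}-\frac{\nu}{\sqrt{x}}=\frac{2\sqrt{x}\,h'(x)}{h(x)},
\]
and from the series one reads off $h'(x)=O(x^p)$ as $x\to 0$, so the right-hand side has a removable zero at the origin. Applying It\^o's formula directly to $h(X_t)\exp\bigl(-\frac{\lambda}{2}\int_0^t X_s^p\,ds\bigr)$ and invoking the ODE (\ref{e:odew}) satisfied by $h$, one obtains $dM^{(u_1)}_t=2\sqrt{X_t}\,h'(X_t)\exp\bigl(-\frac{\lambda}{2}\int_0^t X_s^p\,ds\bigr)\,dB_t$ globally on $[0,\infty)$; since $\sqrt{x}\,h'(x)$ is bounded on $[0,y]$, stopping at $R_y$ produces a genuine bounded martingale with the claimed SDE. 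The principal technical difficulty is this global It\^o calculation across the reflecting (or, when $\nu=-1$, absorbing) boundary at the origin, which the continuous positive extension of $h$ to $0$ is designed precisely to handle.
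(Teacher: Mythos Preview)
Your argument is correct and follows essentially the same route as the paper: extend $w=h$ continuously and positively to $0$ via the series for $I_{\alpha}$, apply It\^o's formula to $h(X_t)\exp(\cdots)$ globally, and deduce boundedness from $X_{t\wedge R_y}\in[0,y]$. The paper differs only in using the recurrence (\ref{e:recI}) to compute $w'$ explicitly and in explicitly invoking the extended It\^o formula (Problem~3.7.3 of \cite{ks}) to cover the case $0<p<1$ where $h\notin C^2$ at $0$; your power-series expansion already gives $h''$ locally integrable and hence $h'$ absolutely continuous, which is exactly what that citation requires.
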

\begin{proof} Note  that $[R < R_y]$ has a positive
  probability. Thus, we have to pay attention to the behaviour of
  $w(x)=u_1(\sqrt{x})x^{-\frac{\nu}{2}}$ at $x=0$. Observe that under
  our assumptions, $\frac{\nu}{p+1}>-1$, thus it follows from the
  series representation of $I_{\alpha}$ that $w(0)>0$ and is
  finite since $I_{\alpha}(1)< \infty$ for any $\alpha$. Next, we will show that $w$ has an absolutely continuous
  derivative over $[0, \infty)$. Using the recurrence
  relations (see Section 3.7 in \cite{tbf})
\bean
I_{\alpha -1}(x)+I_{\alpha+1}(x)&=&2 I'_{\alpha}(x), \mbox{ and}\\
\frac{x}{2}\left(I_{\alpha -1}(x)-I_{\alpha+1}(x)\right)&=&\alpha
I_{\alpha}(x),
\eean
we obtain that
\be \label{e:recI}
I'_{\alpha}(x)=I_{\alpha+1}(x)+\frac{\alpha}{x}I_{\alpha}(x).
\ee
Using this identity it follows from direct calculations that
\[
w'(x)=\frac{\sqrt{\lambda}}{2}x^{\frac{p-\nu-1}{2}}I_{\gamma}\left(\frac{\sqrt{\lambda}}{p+1}
    x^{\frac{p+1}{2}}\right),
\]
where $\gamma=1+\frac{\nu}{p+1}$. Since the leading term of $I_{\gamma}\left(\frac{\sqrt{\lambda}}{p+1}x^{\frac{p+1}{2}}\right)$ as $x \rar 0$ is
  $x^{\frac{\nu+p+1}{2}}$ , we see that $\lim_{x \rar
    0}w'(x)=0$ when $p > 0$.  Therefore, we obtain immediately from the ODE (\ref{e:odew}) that when $p>0$  $\lim_{x\rar 0} xw''(x)=0$ for $\nu \in (-1,0)$. On the other hand, when $p=0$ and $\nu>-1$,
\[
\frac{w'(x)}{w(x)}=\frac{\lambda}{2} \frac{1}{\sqrt{\lambda x}}\frac{I_{v+1}(\sqrt{\lambda x})}{I_v(\sqrt{\lambda x})}
\]
by another application of (\ref{e:recI}). Thus, in view of the asymptotics  of $I_{\alpha}$ as $x \rar 0$
\bean
\lim_{x \rar 0}\frac{w'(x)}{w(x)}&=& \frac{\lambda}{2}\lim_{x \rar 0}\frac{I_{v+1}( x)}{x I_v(x)} \nn \\
&=&\frac{\lambda}{4}  \frac{\Gamma(\nu+1)}{\Gamma(\nu+2)}=\frac{\lambda}{4(\nu+1)} .
\eean
Consequently, $\lim_{x \rar 0} 2(\nu+1)w' -\frac{\lambda}{2}w=0$ since $w(x)>0$ for all $x \geq 0$. Again, it follows from the ODE (\ref{e:odew})  that
 $\lim_{x\rar 0} xw''(x)=0$. However, this condition implies that $\int_0^x w''(y)\,dy$ exists and is finite. Since this
integral equals $w'(x)-w'(0)$ for any $ x \in [0, \infty)$, we conclude that
$w'$ is absolutely continuous on $[0, \infty)$ and $w'(x)=w'(0)+ \int_0^x
w''(y)\,dy$ for any $x \in [0, \infty)$. Then, in view of Problem
3.7.3 in \cite{ks} we immediately deduce that
\bean
w(X_t)&=&w(X_0)+\int_0^t 2 w'(X_s) \sqrt{X_s}\,dB_s +\int_0^t \left\{2
  w'(X_s)(\nu+1)+ 2 w''(X_s)X_s\right\}ds\\
&=&\int_0^t 2 w'(X_s) \sqrt{X_s}\,dB_s +\frac{\lambda}{2}\int_0^t
X_s^p w(X_s)\, ds.
\eean
A simple application of integration by parts formula now shows that
$M^{(u_1)}$ is a martingale with the claimed representation.
\end{proof}
\begin{remark}
Observe that $u_1(x) x^{-\nu}$ is still monotone in view of Remark \ref{r:monotone} since under our hypothesis $\frac{\nu}{p+1}>-2$.
\end{remark}

\begin{corollary} \label{c:k-1/2} Let $u_1$ be the function defined in Theorem
  \ref{t:numinus2} and suppose that the hypotheses therein hold.  Then, we have the following for all $z\leq y$:
  \begin{itemize}
  \item[i)] If  $r \geq 0$ and $U$ is $\cF_{R_y}$-measurable,
\[
Q^{\delta}_z\left[\exp\left(-r U -\frac{\lambda}{2}
    \Sigma^{\delta}_{p,z,y}\right)\right]=\frac{u_1(\sqrt{z})}{u_1(\sqrt{y})}\left(\frac{z}{y}\right)^{-\frac{\nu}{2}}
P^{\delta, u_1}_{z,y}\left[\exp\left(-r U\right)\right],
\]
where $P^{\delta, u_1}_{z,y}$ is defined by $\frac{dP^{\delta, u_1
  }_{z,y}}{dQ^{\delta}_z}=M^{(u_1)}_{R_y}$. Moreover, under $P^{\delta,
  u_1}_{z,y}$, $X$ solves
\be \label{e:Xamc2}
dX_t=2\left(\frac{u_1'(\sqrt{X_t})\sqrt{X_t}}{u_1(\sqrt{X_t})}+1\right)\,dt
+2 \sqrt{X_t}\,d\beta_t, \qquad t \leq R_y,
\ee
for some $P^{\delta, u_1}_{z,y}$-Brownian motion $\beta$.
\item[ii)] For all $r\geq 0$
\[
Q^{\delta}_z\left[\exp\left(-r R_y -\frac{\lambda}{2}
    \Sigma^{\delta}_{p,z,y}\right)\right]=\frac{u_1(\sqrt{z})}{u_1(\sqrt{y})}\left(\frac{z}{y}\right)^{-\frac{\nu}{2}}
\frac{\Psi(z)}{\Psi(y)},
\]
where $\Psi$ is a positive and increasing solution of
\be \label{e:Psi}
2x v'' + 2
\left(\frac{u_1'(\sqrt{x})\sqrt{x}}{u_1(\sqrt{x})}+1\right)v'=rv
\ee
on $(0,\infty)$.
\item[iii)] A scale function of the diffusion defined in (\ref{e:Xamc2}) is given by
\be \label{e:stilde1}
\tilde{s_1}(x)= \int_1^x\frac{1}{y u_1^2(\sqrt{y})}dy, \qquad x\geq 0.
\ee
Thus, for any $0\leq a < z$
\[
Q^{\delta}_z\left[\chf_{[R_a >R_y]}\exp\left( -\frac{\lambda}{2}
    \Sigma^{\delta}_{p,z,y}\right)\right]= \frac{u_1(\sqrt{z})}{u_1(\sqrt{y})}\left(\frac{z}{y}\right)^{-\frac{\nu}{2}} \frac{\tilde{s_1}(z)-\tilde{s_1}(a)}{\tilde{s_1}(y)-\tilde{s_1}(a)}.
\]
\end{itemize}
\end{corollary}
\begin{proof} The proof follows the similar lines as in the proofs of analogous results for $y \leq z$. The only difference is that contrary to the previous case the drift term of the diffusion corresponding to the ODE in (\ref{e:Psi}) is only locally bounded. In particular, it is now larger than $2(\nu+1)$, which in turn implies that the solution is immediately pushed to $(0,\infty)$ as soon as it hits $0$ by comparison results for SDEs. Moreover, being only locally bounded causes no concern for our purposes since the computations involve the law of the diffusion until the first time it reaches $y$. 
\end{proof}

Again, since $I_{-\half}(x)=\sqrt{\frac{2 \pi}{x}}\cosh(x)$ we have
\begin{corollary} $p \geq 0$, $\nu \in (-1,0)$ and $\frac{\nu}{p+1}=-\half$. Then, for
  $y \geq z$ we have the following:
\begin{itemize}
\item[i)] \[
Q^{\delta}_z\left[\exp\left(-r R_y -\frac{\lambda}{2}
    \Sigma^{\delta}_{p,z,y}\right)\right]=\frac{\cosh\left(-\frac{\sqrt{\lambda}}{2 \nu}z^{-\nu}\right)}{\cosh\left(-\frac{\sqrt{\lambda}}{2 \nu}y^{-\nu}\right)}\frac{\Psi(z)}{\Psi(y)},
\]
where $\Psi$ is a positive and increasing solution of
\be \label{e:Psi0}
2x v'' + 2
\left(\nu +1 +\sqrt{\lambda} x^{-\nu}\tanh\left(-\frac{\sqrt{\lambda}}{2 \nu}x^{-\nu}\right)\right)v'=rv
\ee
on $(0,\infty)$.
\item[ii)] The function $\tilde{s_1}$ is, up to an affine transformation, given by $\tanh\left(-\sqrt{\lambda}\frac{x^{-\nu}}{2 \nu}\right)$.
\item[iii)] For $0 \leq a <z$,
\be
Q^{\delta}_z\left[\chf_{[R_a >R_y]}\exp\left( -\frac{\lambda}{2}
    \Sigma^{\delta}_{p,z,y}\right)\right]= \frac{\sinh\left(-\sqrt{\lambda}\frac{z^{-\nu}-a^{-\nu}}{
          2\nu}\right)}{\sinh\left(-\sqrt{\lambda}\frac{y^{-\nu}-a^{-\nu}}{
          2\nu}\right)}. \label{e:lm>a}
\ee
\end{itemize}
\end{corollary}
\begin{proof} Only part iii) needs proof. Note that
\[
Q^{\delta}_z\left[\chf_{[R_a >R_y]}\exp\left( -\frac{\lambda}{2}
    \Sigma^{\delta}_{p,z,y}\right)\right]=\frac{\cosh\left(-\frac{\sqrt{\lambda}}{2 \nu}z^{-\nu}\right)}{\cosh\left(-\frac{\sqrt{\lambda}}{2 \nu}y^{-\nu}\right)} \, \frac{\tanh\left(-\frac{\sqrt{\lambda}}{2 \nu}x^{-\nu}\right)-\tanh\left(-\frac{\sqrt{\lambda}}{2 \nu}a^{-\nu}\right)}{\cosh\left(-\frac{\sqrt{\lambda}}{2 \nu}y^{-\nu}\right)-\tanh\left(-\frac{\sqrt{\lambda}}{2 \nu}a^{-\nu}\right)}.
    \]
On the other hand,
\bean
&&\cosh\left(-\frac{\sqrt{\lambda}}{2 \nu}z^{-\nu}\right)\left\{\tanh\left(-\frac{\sqrt{\lambda}}{2 \nu}x^{-\nu}\right)-\tanh\left(-\frac{\sqrt{\lambda}}{2 \nu}a^{-\nu}\right)\right\}\\
&=&\frac{\sinh\left(-\frac{\sqrt{\lambda}}{2 \nu}z^{-\nu}\right)\cosh\left(-\frac{\sqrt{\lambda}}{2 \nu}a^{-\nu}\right)-\cosh\left(-\frac{\sqrt{\lambda}}{2 \nu}z^{-\nu}\right)\sinh\left(-\frac{\sqrt{\lambda}}{2 \nu}a^{-\nu}\right)}{\cosh\left(-\frac{\sqrt{\lambda}}{2 \nu}a^{-\nu}\right)}\\
&=&\frac{\sinh\left(-\frac{\sqrt{\lambda}}{2 \nu}z^{-\nu}\right)\cosh\left(\frac{\sqrt{\lambda}}{2 \nu}a^{-\nu}\right)+\cosh\left(-\frac{\sqrt{\lambda}}{2 \nu}z^{-\nu}\right)\sinh\left(\frac{\sqrt{\lambda}}{2 \nu}a^{-\nu}\right)}{\cosh\left(-\frac{\sqrt{\lambda}}{2 \nu}a^{-\nu}\right)}\\
&=&\frac{\sinh\left(\frac{\sqrt{\lambda}}{2 \nu}\left(a^{-\nu}-z^{-\nu}\right)\right)}{\cosh\left(-\frac{\sqrt{\lambda}}{2 \nu}a^{-\nu}\right)},
\eean
which yields the claimed representation.
\end{proof}

Note that in fact we do not need to assume $p>0$ for part iii) of the above
result to hold, since $X$ is never $0$ before $R_a$ for $0<a<z$. Moreover,  (\ref{e:lm>a}) and (\ref{e:lM<a}) are the same. Thus,
\begin{corollary} \label{c:sigcl-up} Suppose that $\frac{\nu}{p+1}=-\frac{1}{2}$, $\nu \in (-1,0)$ and $p>-1$. Then, for
  $y \geq z >a$ we have that the law of $\Sigma^{\delta}_{p,z,y}$
  conditioned on the event $[R_a>R_y]$ is that of  the first hitting
  time of $(a^{-\nu}-y^{-\nu})^2/4\nu^2$ by a $3$-dimensional squared
  Bessel process started at $(a^{-\nu}-z^{-\nu})^2/4\nu^2$.
\end{corollary}
 We end this section with a  scaling property which will be useful in
the subsequent section. It is a direct consequence of the scaling
property of $BESQ^{\delta}$ applied to the definition of $\Sigma^{\delta}_{p,0,y}$.
\begin{proposition} \label{p:scalingn} Suppose that $p\geq 0, \nu \in (-1,0)$. Then, we have the
  following identity in law for any $y\geq 0$:
\[
y^{p+1}\Sigma^{\delta}_{p,0,1}\eid \Sigma^{\delta}_{p,0,y}.
\]
\end{proposition}

\subsection{The case $\nu \geq 0$}
Recall that when $\nu \geq 0$ the point $0$ is polar for $X$. Thus,
one can prove without any difficulty  that $M^{(u_1)}$, where  $u_1$ is as  defined in Theorem \ref{t:numinus2}, is a martingale stopped at $R_y$. Note that for $\nu \geq 0$ the random variable $\Sigma^{\delta}_{p,0,y}$ is well-defined and finite even in the case $p \in (-1,0)$. Indeed, Corollary XI.1.12 in \cite{ry} show that 
\[
\int_0^1 X_s^p ds \eid \left(\int_0^1 Y_s^{-\frac{2p}{2+p}}ds\right)^{\frac{2+p}{p}},
\]
where $X$ is a $BESQ^{2(\nu+1)}(0)$  and $Y$ is a  $BESQ^{2\left(\nu\frac{2}{2+p}+1\right)}(0)$. Since $-\frac{2p}{2+p} >0$ whenever $p \in (-1,0)$ we easily deduce that the random variable on the left-hand side of the above identity is finite. Since a Bessel process with a positive dimension never comes back to $0$ again this yields the finiteness of $\int_0^{\eps} X_s^p ds$ for all $\eps>0$, which in turn implies the finiteness of $\Sigma^{\delta}_{p,0,y}$.
\begin{theorem} \label{t:m2} Suppose that $p>-1, y \geq z,$ and $\nu \geq  0$. Let
  $u_1$ be the function defined in Theorem \ref{t:numinus2}. Then,
  $(M^{(u_1)}_{t \wedge R_y})_{t \geq 0}$ is a bounded martingale.
\end{theorem}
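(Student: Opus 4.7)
My plan is to exploit the fact that $\{0\}$ is polar for $X$ when $\nu \geq 0$ under $Q^\delta_x$. This single observation eliminates the delicate absolute-continuity analysis near the boundary that dominated the proof of Theorem \ref{t:numinus2}, so what was a lengthy argument there collapses into a short direct verification.

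First, I would note that since $R = +\infty$ almost surely, the indicators $\chf_{[t<R]}$ in the statement of Lemma \ref{l:lmart} may simply be dropped. Lemma \ref{l:lmart} then yields directly that $(M^{(u_1)}_t)_{t \geq 0}$ is a local martingale, and hence so is the stopped process $(M^{(u_1)}_{t \wedge R_y})_{t \geq 0}$. No boundary analysis at $x = 0$ is required, and the additional hypotheses on $(p,\nu)$ that appeared in Theorem \ref{t:numinus2} in order to handle the reflection at zero are consequently not needed here.

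Second, I would establish uniform boundedness of $M^{(u_1)}_{\cdot \wedge R_y}$ in the intended case $x \leq y$. Setting $w(x) := u_1(\sqrt{x}) x^{-\nu/2}$ and applying the series representation \eqref{e:mbdef} with index $\alpha = \nu/(p+1) \geq 0$, after cancelling the factor $x^{\nu/2}$ coming from the leading power of $I_\alpha$ one sees that $w$ equals a convergent power series in $x^{p+1}$ with strictly positive coefficients; in particular $w$ is continuous and positive on $[0,\infty)$ and attains a finite maximum $C_y$ on the compact interval $[0,y]$. By continuity of $X$, the stopped process $X_{t \wedge R_y}$ stays in $[0,y]$, so $w(X_{t \wedge R_y}) \leq C_y$; combined with the trivial bound $\exp\bigl(-\tfrac{\lambda}{2}\int_0^{t \wedge R_y} X_s^p\,ds\bigr) \leq 1$, this yields a uniform deterministic upper bound on $M^{(u_1)}_{t \wedge R_y}$. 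A bounded local martingale is a true martingale, completing the proof.

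The main obstacle here is essentially only bookkeeping: to keep the statement clean one restricts to $x \leq y$, because $I_\alpha$ grows exponentially at infinity and so $w$ is unbounded on $[y,\infty)$. For the complementary range $x > y$ one would instead work with the other solution of \eqref{e:ode}, namely $u_0 = K_{\nu/(p+1)}$, and argue along the lines of Theorem \ref{t:numinus}; no such refinement is needed for the present statement.
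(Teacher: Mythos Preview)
Your proof is correct and follows precisely the approach the paper has in mind: the paper's own argument consists only of the single observation that $\{0\}$ is polar for $\nu\ge 0$, so that ``one can prove without any difficulty'' the result, and your proposal is exactly the natural fleshing-out of that remark via Lemma~\ref{l:lmart} together with the continuity of $w$ on the compact interval $[0,y]$. Your explicit restriction to $x\le y$ is also the intended scope (cf.\ Corollary~\ref{c:LTxy+}), even though the theorem statement does not spell it out.
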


Recall that $BESQ^{\delta}$ is transient when $\nu >0$, thus  $Q^{\delta}_z(R_y<\infty)=1$ whenever $y \geq z$. Consequently, we can deduce the following.
\begin{corollary} \label{c:LTxy+}Let $u_1$ be the function defined in Theorem
  \ref{t:numinus2} and suppose that $p>-1, \nu \geq 0$. Then, we have the following for all $z\leq y$:
  \begin{itemize}
  \item[i)] If  $r \geq 0$ and $U$ is  $\cF_{R_y}$-measurable,
\[
Q^{\delta}_z\left[\exp\left(-r U -\frac{\lambda}{2}
    \Sigma^{\delta}_{p,z,y}\right)\right]=\frac{u_1(\sqrt{z})}{u_1(\sqrt{y})}\left(\frac{z}{y}\right)^{-\frac{\nu}{2}}
P^{\delta, u_1}_{z,y}\left[\exp\left(-r U\right)\right],
\]
where $P^{\delta, u_1}_{z,y}$ is defined by $\frac{dP^{\delta, u_1
  }_{z,y}}{dQ^{\delta}_z}=M^{(u_1)}_{R_y}$. Moreover, under $P^{\delta,
  u_1}_{z,y}$, $X$ satisfies (\ref{e:Xamc2}).
\item[ii)] For all $r\geq 0$
\[
Q^{\delta}_z\left[\exp\left(-r R_y -\frac{\lambda}{2}
    \Sigma^{\delta}_{p,z,y}\right)\right]=\frac{u_1(\sqrt{z})}{u_1(\sqrt{y})}\left(\frac{z}{y}\right)^{-\frac{\nu}{2}}
\frac{\Psi(z)}{\Psi(y)},
\]
where $\Psi$ is a  positive and increasing solution of (\ref{e:Psi}) on $(0,\infty)$.
\item[iii)] For any $a < z$
\[
Q^{\delta}_z\left[\chf_{[R_a >R_y]}\exp\left( -\frac{\lambda}{2}
    \Sigma^{\delta}_{p,z,y}\right)\right]= \frac{u_1(\sqrt{z})}{u_1(\sqrt{y})}\left(\frac{z}{y}\right)^{-\frac{\nu}{2}} \frac{\tilde{s_1}(z§		§)-\tilde{s_1}(a)}{\tilde{s_1}(y)-\tilde{s_1}(a)},
\]
where $\tilde{s_1}$ is as defined in (\ref{e:stilde1}).
\end{itemize}
\end{corollary}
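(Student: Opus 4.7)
The plan is to mirror the argument already used in the $\nu<0$, $y\leq x$ case (Corollaries \ref{c:jointl}, the hitting-time corollary, and \ref{c:jmS}), with Theorem \ref{t:m2} now playing the role of Theorem \ref{t:numinus2}. Since Theorem \ref{t:m2} already delivers that $(M^{(u_1)}_{t\wedge R_y})_{t\geq 0}$ is a bounded martingale for $\nu\geq 0$, the substance of the proof is an optional-stopping argument at $R_y$ followed by a Girsanov-type change of measure.

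First, I would verify that $R_y<\infty$ almost surely under $Q^{\delta}_x$ whenever $y\geq x$ and $\nu\geq 0$. For $\nu=0$ this is immediate from the neighbourhood recurrence of $BESQ^2$ in $(0,\infty)$; for $\nu>0$ the process is transient but the scale function $s^{\nu}(x)=-x^{-\nu}$ is bounded above, so $X_t\rightarrow\infty$ a.s.\ and every level above the starting point is hit in finite time. Consequently
\[
M^{(u_1)}_{R_y}\;=\;u_1(\sqrt{y})\,y^{-\nu/2}\exp\!\left(-\frac{\lambda}{2}\Sigma^{\delta}_{p,x,y}\right)\qquad Q^{\delta}_x\text{-a.s.},
\]
and optional stopping applied to the bounded martingale yields $Q^{\delta}_x[M^{(u_1)}_{R_y}]=u_1(\sqrt{x})x^{-\nu/2}$. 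Defining $P^{\delta,u_1}_x$ by $\frac{P^{\delta,u_1}_x}{Q^{\delta}_x}=M^{(u_1)}_{R_y}/Q^{\delta}_x[M^{(u_1)}_{R_y}]$ gives part (i) at once, and the SDE (\ref{e:Xamc2}) follows from Girsanov's theorem applied to the stochastic logarithm of $M^{(u_1)}$ exhibited in Theorem \ref{t:m2}: the drift acquires the extra term $2\sqrt{X_t}\bigl(\frac{u_1'(\sqrt{X_t})}{u_1(\sqrt{X_t})}-\frac{\nu}{\sqrt{X_t}}\bigr)$, which combines with the original drift $2(\nu+1)$ to produce exactly $2\bigl(\frac{u_1'(\sqrt{X_t})\sqrt{X_t}}{u_1(\sqrt{X_t})}+1\bigr)$.

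For part (ii) I would specialise $U=R_y$ in part (i). The remaining factor $P^{\delta,u_1}_x[e^{-rR_y}]$ is the Laplace transform of the first hitting time of $y$ by the one-dimensional diffusion defined in (\ref{e:Xamc2}), which by the standard theory of one-dimensional diffusions equals $\Psi(x)/\Psi(y)$ for the continuous and increasing solution $\Psi$ of the generator equation (\ref{e:Psi}). For part (iii), I would take $r=0$ and $U=-\log\chf_{[R_a>R_y]}$ (interpreting $e^{-U}$ as $\chf_{[R_a>R_y]}$), so that part (i) reduces the computation to evaluating $P^{\delta,u_1}_x[R_y<R_a]$. By the defining property of scale functions applied to (\ref{e:Xamc2}) this probability equals $\frac{\tilde{s_1}(x)-\tilde{s_1}(a)}{\tilde{s_1}(y)-\tilde{s_1}(a)}$, with $\tilde{s_1}$ read off from Exercise VII.3.20 in \cite{ry} exactly as in Corollary \ref{c:jmS}; integrability of $1/(y u_1^2(\sqrt{y}))$ at the starting point is no longer an issue because the relevant range is bounded away from $0$ thanks to $a<x$.

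The main subtlety to get right is the finiteness of $R_y$ in the transient regime $\nu>0$; once that has been secured, the rest is a routine transcription of the $\nu<0$, $y\leq x$ argument and no genuinely new analysis enters.
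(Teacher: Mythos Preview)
Your proposal is correct and follows exactly the route the paper takes: the paper does not write out a separate proof for this corollary, merely remarking that $R_y<\infty$ a.s.\ when $\nu\geq 0$ and $y\geq x$ and that the statements then follow as in Corollaries \ref{c:jointl}--\ref{c:jmS} and \ref{c:k-1/2}. Your write-up supplies precisely those omitted details (and is in fact more careful than the paper about normalising $M^{(u_1)}_{R_y}$ so that $P^{\delta,u_1}_x$ is a genuine probability measure).
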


Analogous to Proposition \ref{p:scalingn} we  have the following
scaling property.
\begin{proposition} \label{p:scalingp} Suppose that $p> -1, \nu \geq 0$. Then, we have the
  following identity in law for any $y\geq 0$:
\[
y^{p+1}\Sigma^{\delta}_{p,0,1}\eid \Sigma^{\delta}_{p,0,y}.
\]
\end{proposition}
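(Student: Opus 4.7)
The plan is to reduce the identity to the classical scaling property of squared Bessel processes: if $X$ is $BESQ^{\delta}(x)$ and $c>0$, then $(c X_{t/c})_{t\geq 0}$ is $BESQ^{\delta}(cx)$. In particular, with $x=0$, the law of $BESQ^{\delta}(0)$ is preserved under the map $X \mapsto (yX_{\cdot/y})$ for any $y>0$. The case $y=0$ is trivial since both sides vanish, so assume $y>0$.

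First, I would let $X$ denote a $BESQ^{\delta}(0)$ on some probability space and set $Y_t := y X_{t/y}$. By the scaling property invoked above, $Y$ is again $BESQ^{\delta}(0)$. Next I would compare the hitting times: since $Y_t = y$ iff $X_{t/y} = 1$, we have the time-change identity
\[
R^Y_y = y\, R^X_1,
\]
where $R^X_1 = \inf\{s\geq 0 : X_s = 1\}$ and $R^Y_y = \inf\{t\geq 0 : Y_t = y\}$.

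Then I would substitute into the defining integral for $\Sigma^{\delta}_{p,0,y}$ built from $Y$, which by construction is distributed as $\Sigma^{\delta}_{p,0,y}$. Changing variables $u = s/y$ yields
\[
\int_0^{R^Y_y} Y_s^p\, ds = \int_0^{y R^X_1} y^p X_{s/y}^p\, ds = y^{p+1}\int_0^{R^X_1} X_u^p\, du = y^{p+1}\Sigma^{\delta}_{p,0,1},
\]
where the right-most integral, computed from $X$, is distributed as $\Sigma^{\delta}_{p,0,1}$. This gives the claimed equality in law.

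The argument is essentially a routine application of scaling; no serious obstacle arises since $\nu\geq 0$ guarantees $R_y<\infty$ a.s.\ when starting from $0$ (as $0$ is an entrance boundary and the process is transient for $\nu>0$ / recurrent for $\nu=0$), so all integrals are well defined. The proof of Proposition \ref{p:scalingn} is identical, which is why the two statements share the same content.
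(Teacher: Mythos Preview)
Your proof is correct and follows exactly the approach the paper indicates: the paper states (just before Proposition~\ref{p:scalingn}) that the result ``is a direct consequence of the scaling property of $BESQ^{\delta}$ applied to the definition of $\Sigma^{\delta}_{p,0,y}$,'' and Proposition~\ref{p:scalingp} is simply announced as analogous. You have spelled out precisely that computation.
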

We now return to the case $y \leq z$.
\begin{proposition} \label{p:jmS} Let $u_0$ be the function defined in Theorem
  \ref{t:numinus} and suppose that $\nu \geq 0, p>-1$. Then, we have the following for all $z\geq y$:
  \begin{itemize}
  \item[i)] If  $r \geq 0$ and $U$ is  $\cF_{R_y}$-measurable,
  \[
Q^{\delta}_z\left[\exp\left(-r U -\frac{\lambda}{2}
    \Sigma^{\delta}_{p,z,y}\right)\right]=\frac{u_0(\sqrt{z})}{u_0(\sqrt{y})}\left(\frac{z}{y}\right)^{-\frac{\nu}{2}}
P^{\delta, u_0}_{z,y}\left[\exp\left(-r U\right)\right],
\]
where $P^{\delta, u_0}_{z,y}$ is defined by $\frac{d P^{\delta, u_0
  }_{z,y}}{dQ^{\delta}_z}=M^{(u_0)}_{R_y}$.  Moreover, under $P^{\delta,
  u_0}_{z,y}$, $X$ satisfies (\ref{e:Xamc}) until $R_y$.
\item[ii)] For all $r\geq 0$
\[
Q^{\delta}_z\left[\exp\left(-r R_y -\frac{\lambda}{2}
    \Sigma^{\delta}_{p,z,y}\right)\right]=\frac{u_0(\sqrt{z})}{u_0(\sqrt{y})}\left(\frac{z}{y}\right)^{-\frac{\nu}{2}}
\frac{\Phi(z)}{\Phi(y)},
\]
where $\Phi$ is a positive and decreasing solution of (\ref{e:Phi}) on $(0,\infty)$.
\item[iii)] For any $a > z$
\[
Q^{\delta}_z\left[\chf_{[R_a >R_y]}\exp\left( -\frac{\lambda}{2}
    \Sigma^{\delta}_{p,z,y}\right)\right]= \frac{u_0(\sqrt{z})}{u_0(\sqrt{y})}\left(\frac{z}{y}\right)^{-\frac{\nu}{2}} \frac{\tilde{s_0}(z)-\tilde{s_0}(a)}{\tilde{s_0}(y)-\tilde{s_0}(a)},
\]
where $\tilde{s_0}$ is as defined in (\ref{e:stilde0}).
\end{itemize}
\end{proposition}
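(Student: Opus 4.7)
The proof closely mirrors those of Corollary~\ref{c:jointl} and Corollary~\ref{c:jmS}; the only substantive new ingredient is the verification that $(M^{(u_0)}_{t\wedge R_y})_{t\geq 0}$ is a bounded martingale in the present regime.

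Since $u_0$ solves the ODE (\ref{e:ode}), Lemma~\ref{l:lmart} gives that $M^{(u_0)}$ is a local martingale on $[0,R)$, and polarity of $\{0\}$ under $Q^{\delta}_x$ for $\nu\geq 0$ allows us to replace $R$ by $R_y$ for any $y\in(0,x]$. Boundedness rests on two observations. First, by continuity of $X$ and the definition of $R_y$, one has $X_t\in[y,\infty)$ for all $t\leq R_y$, so the process $u_0(\sqrt{X_t})X_t^{-\nu/2}$ takes values in the range of a continuous function on $[y,\infty)$. Second, the integral representation (\ref{e:int2K}) still applies, since $\nu/(p+1)\geq 0 >-\frac{1}{2}$, and exactly the argument given in the proof of Theorem~\ref{t:numinus} shows that $u_0(\sqrt{x})x^{-\nu/2}$ has a finite limit as $x\to\infty$ (the exponential decay of $K_{\alpha}$ at infinity dominates $x^{-\nu/2}$). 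Combined with the trivial bound $\exp\bigl(-\tfrac{\lambda}{2}\int_0^{t}X_s^p\,ds\bigr)\leq 1$, this yields the desired boundedness.

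With the bounded martingale property in hand, the three assertions follow by the same arguments already used in the $\nu<0$ case. For part (i) one defines $P^{\delta,u_0}_x$ via the Radon--Nikodym derivative $M^{(u_0)}_{R_y}$; the Laplace-transform identity then follows by optional stopping applied to $M^{(u_0)}_{R_y}e^{-rU}$, while the SDE (\ref{e:Xamc}) for $X$ under $P^{\delta,u_0}_x$ is the immediate Girsanov consequence of the representation of $dM^{(u_0)}$ supplied by Lemma~\ref{l:lmart}. Part (ii) is the specialisation $U=R_y$ together with the standard formula for Laplace transforms of hitting times of one-dimensional diffusions applied to the diffusion (\ref{e:Xamc}), as in the corollary following Corollary~\ref{c:jointl}. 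Part (iii) is obtained by taking $U=\log\chf_{[R_a>R_y]}$ in part (i) and invoking the defining property of the scale function $\tilde{s_0}$ of (\ref{e:Xamc}), exactly as in Corollary~\ref{c:jmS}.

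The only (mild) obstacle is thus the boundedness check; the essential point is that $u_0(\sqrt{x})x^{-\nu/2}$ need not be bounded near $x=0$ when $\nu\geq 0$, but this is harmless because $X$ never visits a neighbourhood of the origin before $R_y$ when $y>0$. The boundary case $y=0$ (relevant only for $\nu=0$, since $R_0=\infty$ a.s.\ for $\nu>0$) can be recovered as a monotone limit $y\downarrow 0$.
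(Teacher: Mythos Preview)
Your approach is the same one the paper uses, and your boundedness check is correct. There is, however, a genuine gap: you implicitly treat $R_y$ as almost surely finite, but for $\nu>0$ the process $X$ is transient, so $Q^{\delta}_x(R_y=\infty)>0$ whenever $y<x$. The phrase ``the same arguments already used in the $\nu<0$ case'' glosses over precisely the new feature of the $\nu\geq 0$ regime. In the $\nu<0$ case one has $R_y\leq R<\infty$ a.s., which is why Corollary~\ref{c:jointl} needed no further comment; here that is false.

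What is missing is the evaluation of $M^{(u_0)}_{R_y}$ on the event $[R_y=\infty]$. Your own observation that $u_0(\sqrt{x})x^{-\nu/2}\to 0$ as $x\to\infty$ (from the exponential decay of $K_{\alpha}$) does the job: on $[R_y=\infty]$ transience forces $X_t\to\infty$, hence $M^{(u_0)}_{\infty}=0$. One must also note that $\Sigma^{\delta}_{p,x,y}=\infty$ on that event, so the left-hand side of the identity in (i) receives no contribution from $[R_y=\infty]$ either. With these two remarks the optional stopping computation goes through; this is exactly what the paper's proof records. Your final sentence about the ``boundary case $y=0$'' suggests you believed $R_y<\infty$ a.s.\ for $y>0$, which is the source of the oversight.
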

\begin{proof}
We will only give the details for the parts of the proof that differ from the analogous results of the previous section. Observe that  $(M^{(u_0)}_{t \wedge R_y})_{t \geq 0}$
is a uniformly integrable martingale when $X$ starts at a value larger than $y$. Thus,
using the Optional Stopping Theorem, we obtain
\bean
u_0(\sqrt{z}) z^{-\frac{\nu}{2}} P^{\delta, u_0}_{z,y}\left[\exp\left(-r U\right)\right]&=&Q^{\delta}_z\left[\exp\left(-r U\right)M^{(u_0)}_{R_y}\right]\\
&=&Q^{\delta}_z\left[\chf_{[R_y
    <\infty]}u_0(\sqrt{y})y^{-\frac{\nu}{2}}\exp\left(-r U -\frac{\lambda}{2}\Sigma^{\delta}_{p,z,y}\right)\right]\\
    &&+Q^{\delta}_z\left[\chf_{[R_y
    =\infty]}M^{(u_0)}_{\infty}\right].
\eean
For $\nu=0$, $R_y$ is finite a.s., hence the claim. In case of
$\nu>0$, we still obtain the formula  since, on the set $[R_y=\infty]$, $\Sigma^{\delta}_{p,z,y}=\infty$ as well as   $u_0(\infty)=0$ due to the transience of $X$.

Using the recurrence relations for $K_{\alpha}$ as in Corollary \ref{c:ode-sdeK}, we can deduce that 
\[
\frac{u_0'(\sqrt{x})\sqrt{x}}{u_0(\sqrt{x})}\leq -\nu,
\]
and the left-hand side of the above inequality converging to the right-hand side as $x$ tends to $0$. Thus, solution  of the SDE given by  (\ref{e:Xamc})  until the first time it hits $0$ is less than that of 
\[
dX_t = 2 (-\nu+1) dt + 2 \sqrt{|X_t|}d\beta_t.
\]
Note that if $\nu >1$ the drift of the above SDE is negative hence its solution corresponds to a squared Bessel process of negative dimension. This SDE has a unique strong solution and it stays $(-\infty, 0]$ as soon as it hits $0$ (see Section 3 of \cite{gy}). Thus, we conclude by means of comparison results for SDEs that there exists a solution to (\ref{e:Xamc}) until $R_y$ for every $y>0$. 

The ODE (\ref{e:Phi}) has increasing and decreasing solutions which can be determined as before via the Laplace transforms of the first hitting times of the the diffusion on $(0, \infty)$ with the generator 
\[
\cA = 2x  \frac{d^2}{dx^2} + 2\left(\frac{u_0'(\sqrt{x})\sqrt{x}}{u_0(\sqrt{x})} +1\right)\frac{d}{dx},
\]
which is killed as soon as it reaches $0$. 

\end{proof}

As before, using the explicit form of $K_{\frac{1}{2}}$, one gets 
\begin{corollary} \label{c:bht+} Suppose that $\frac{\nu}{p+1}=\frac{1}{2}$ and $p>-1$. Then, for
  $y \leq z$ we have the following:
\begin{itemize}
\item[i)] \[
Q^{\delta}_z\left[\exp\left(-r R_y -\frac{\lambda}{2}
    \Sigma^{\delta}_{p,z,y}\right)\right]=\frac{y^{\nu}}{z^{\nu}}\exp\left(-\sqrt{\lambda}\frac{z^{\nu}-y^{\nu}}{2 \nu}\right)
\frac{\Phi(z)}{\Phi(y)},
\]
where $\Phi$ is a positive and decreasing solution of
\be \label{e:Phi00}
2x v'' + 2
\left(-\nu +1 -\sqrt{\lambda} x^{\nu}\right)v'=rv
\ee
on $(0,\infty)$.
\item[ii)] The function $\tilde{s_0}$ is, up to an affine transformation, given by $\exp\left(\sqrt{\lambda}\frac{x^{\nu}}{\nu}\right)$.
\item[iii)] For $a >z$
\bea
Q^{\delta}_z\left[\chf_{[R_a >R_y]}\exp\left( -\frac{\lambda}{2}
    \Sigma^{\delta}_{p,z,y}\right)\right]&=& \frac{y^{\nu}}{z^{\nu}}\exp\left(-\sqrt{\lambda}\frac{z^{\nu}-y^{\nu}}{2 \nu}\right)\frac{\exp\left(\sqrt{\lambda}\frac{z^{\nu}}{ \nu}\right)-\exp\left(\sqrt{\lambda}\frac{a^{\nu}}{ \nu}\right)}{\exp\left(\sqrt{\lambda}\frac{y^{\nu}}{ \nu}\right)-\exp\left(\sqrt{\lambda}\frac{a^{\nu}}{ \nu}\right)}\nn
    \\
    &=& \frac{y^{\nu}}{z^{\nu}}\frac{\sinh\left(\sqrt{\lambda}\frac{a^{\nu}-z^{\nu}}{ 2\nu}\right)}{\sinh\left(\sqrt{\lambda}\frac{a^{\nu}-y^{\nu}}{ 2\nu}\right)}.
\eea
\end{itemize}
\end{corollary}
As in the case with $\frac{\nu}{p+1}=-\frac{1}{2}$ we get
\bean
Q^{\delta}_z\left[\exp\left( -\frac{\lambda}{2}
    \Sigma^{\delta}_{p,z,y}\right)\bigg| R_a >R_y\right]&=&\frac{y^{\nu}}{z^{\nu}}\frac{y^{-\nu}-a^{-\nu} }{z^{-\nu}-a^{-\nu} }\frac{\sinh\left(\sqrt{\lambda}\frac{a^{\nu}-z^{\nu}}{
          2\nu}\right)}{\sinh\left(\sqrt{\lambda}\frac{a^{\nu}-y^{\nu}}{
          2\nu}\right)}\\
          &=&\frac{1-\left(\frac{y}{a}\right)^{\nu}}{1-\left(\frac{z}{a}\right)^{\nu}}\frac{\sinh\left(\sqrt{\lambda}\frac{a^{\nu}-z^{\nu}}{
          2\nu}\right)}{\sinh\left(\sqrt{\lambda}\frac{a^{\nu}-y^{\nu}}{
          2\nu}\right)}\\
      &=&\frac{a^{\nu}-y^{\nu}}{a^{\nu}-z^{\nu}}\frac{\sinh\left(\sqrt{\lambda}\frac{a^{\nu}-z^{\nu}}{
          2\nu}\right)}{\sinh\left(\sqrt{\lambda}\frac{a^{\nu}-y^{\nu}}{
          2\nu}\right)},
\eean
and hence
\begin{corollary}\label{c:sigcl+} Suppose that $\frac{\nu}{p+1}=\frac{1}{2}$ and $p>-1$. Then, for
  $y \leq z <a$ we have that the law of $\Sigma^{\delta}_{p,z,y}$
  conditioned on the event $[R_a>R_y]$ is that of  the first hitting
  time of $(a^{\nu}-y^{\nu})^2/4\nu^2$ by a $3$-dimensional squared
  Bessel process started at $(a^{\nu}-z^{\nu})^2/4\nu^2$.
\end{corollary}
Similarly, since $I_{\half}(x)=\sqrt{\frac{2 \pi}{x}}\sinh(x)$ we have
\begin{corollary} $p >-1$ and $\frac{\nu}{p+1}=\half$. Then, for
  $y \geq z$ we have the following:
\begin{itemize}
\item[i)] \[
Q^{\delta}_z\left[\exp\left(-r R_y -\frac{\lambda}{2}
    \Sigma^{\delta}_{p,z,y}\right)\right]=\frac{y^{\nu}\sinh\left(\frac{\sqrt{\lambda}}{2 \nu}z^{\nu}\right)}{z^{\nu}\sinh\left(\frac{\sqrt{\lambda}}{2 \nu}y^{\nu}\right)}\frac{\Psi(z)}{\Psi(y)},
\]
where $\Psi$ is a positive and increasing solution of
\be \label{e:Psi00}
2x v'' + 2
\left(-\nu +1 +\sqrt{\lambda} x^{\nu}\coth\left(\frac{\sqrt{\lambda}}{2 \nu}x^{\nu}\right)\right)v'=rv
\ee
on $(0,\infty)$.
\item[ii)] The function $\tilde{s_1}$ is, up to an affine transformation, given by $\coth\left(\sqrt{\lambda}\frac{x^{\nu}}{2 \nu}\right)$.
\item[iii)] For $0 \leq a <z$,
\be
Q^{\delta}_z\left[\chf_{[R_a >R_y]}\exp\left( -\frac{\lambda}{2}
    \Sigma^{\delta}_{p,z,y}\right)\right]= \frac{y^{\nu}}{z^{\nu}}\frac{\sinh\left(\sqrt{\lambda}\frac{z^{\nu}-a^{\nu}}{
          2\nu}\right)}{\sinh\left(\sqrt{\lambda}\frac{y^{\nu}-a^{\nu}}{
          2\nu}\right)}. \label{e:lm>a0}
\ee
\end{itemize}
\end{corollary}
\begin{remark} Comparing parts i) of Corollary \ref{c:bht-} and
  \ref{c:bht+} immediately gives us that, for $z \geq y$, the
  distributions of $\Sigma^{\delta}_{p,z,y}$ are different  when $\nu$
  has different signs. On the other hand, Corollaries \ref{c:sigcl-}
  and \ref{c:sigcl+} imply that they have the same distribution once
  they are conditioned on the event that the maximum of the underlying
  squared Bessel process is less than $a$ by time $R_y$. Same
  conclusion holds when $z\leq y$.
\end{remark}
\section{Small ball problem and Chung's law of iterated logarithm}
The small ball problem (also called small deviations) for a stochastic process $Z=(Z_t)_{t \in
  \mathcal{T}}$ consists in finding the probability
\[
\bbP[\|Z\| < \eps] \qquad \mbox{as } \eps \rar 0,
\]
$\| \cdot\|$ is a given norm, usually $L^p$ or $L^{\infty}$. It is
connected to many other questions, such as the law of the iterated
logarithm of Chung's type (Chung's LIL for short), strong limit laws in statistics, metric
entropy properties of linear operators and several  approximation
quantities for stochastic processes. The determination of the
above probability is not feasible other than in  a very few cases and
one is inclined to consider the asymptotic behaviour of
\[
-\log \bbP[\|Z\| < \eps] \qquad \mbox{as } \eps \rar 0.
\]
The solution to the latter problem is also not available in full
generality. However, one can get this asymptotic behaviour for
Gaussian processes (see, e.g.,  \cite{ls01} and \cite{lspa}) or real-valued L\'evy processes (see
\cite{ad}). There is a large amount of literature on small ball
probabilities in the Gaussian setting
and one can consult the survey article \cite{ls01}.

As one can expect from the computations made in the previous section, we will be interested in the small ball probabilities for the
stochastic process $(X_t)_{t \geq 0}$, and the
``norm''
\[
\|Z\|_{p,y}=\left(\int_0^{R_y} |Z_t|^p\,dt\right)^{\frac{1}{p}},
\]
where $p \in (0,\infty)$ and $R_y=\inf\{t>0:X_t=y\}$. Observe that the above
definition is not a real norm unless $p\geq 1$, however, as the
results  in this section does not depend on whether $\|\cdot\|_{p,y}$
is a true norm, this is not a problem. Our results and proofs are close in nature to the results of \cite{ksi}.

Interestingly, the small ball probabilities for $X$ under the above norm does not depend on its index, $\nu$, as seen from the next theorem.
\begin{theorem} \label{t:sbp} Let $X$ be a $BESQ^{\delta}$ as defined by (\ref{e:sdeX}) with $\delta>0$, and $R_y=\inf\{t>0:X_t=y\}$. Then, one has, for $z \geq 0$ and $y\geq 0$,
\bean
\lim_{\lambda \rar \infty} \lambda^{-\frac{1}{2}}\log Q_z^{\delta}\left[\exp\left(-\lambda \|X\|_{p,y}^p\right)\right]&=&-\frac{\sqrt{2}}{p+1}\left|z^{\frac{p+1}{2}}-y^{\frac{p+1}{2}}\right|\\
\lim_{\eps \rar 0} \eps^p \log  Q_z^{\delta}\left[\|X\|_{p,y}< \eps\right]
&=&-\frac{1}{2(p+1)^2}\left(z^{\frac{p+1}{2}}-y^{\frac{p+1}{2}}
\right)^2.
\eean
\end{theorem}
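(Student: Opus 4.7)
The plan is to reduce the theorem to a Tauberian argument: first establish the Laplace-transform asymptotics as $\lambda\to\infty$ (which is the first equality), then invert via a de~Bruijn/Kasahara Tauberian theorem to extract the small ball probability (the second equality). The inputs are the closed-form Laplace transforms assembled in Section \ref{s:2} together with the classical large-argument expansions of the modified Bessel functions.

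Setting $U\equiv 0$ in Corollaries \ref{c:jointl}, \ref{c:k-1/2}, \ref{c:LTxy+} and in Proposition \ref{p:jmS} gives, in each of the four regimes $\{\nu<0,\,\nu\geq 0\}\times\{y\leq x,\,y\geq x\}$, the identity
\[
Q^{\delta}_x\!\left[\exp\!\left(-\tfrac{\lambda}{2}\Sigma^{\delta}_{p,x,y}\right)\right]=\frac{u_i(\sqrt{x})}{u_i(\sqrt{y})}\left(\frac{x}{y}\right)^{-\frac{\nu}{2}},
\]
where $u_0(r)=K_{|\nu|/(p+1)}\bigl(\tfrac{\sqrt{\lambda}}{p+1}r^{p+1}\bigr)$ is used when $y\leq x$ and $u_1(r)=I_{|\nu|/(p+1)}\bigl(\tfrac{\sqrt{\lambda}}{p+1}r^{p+1}\bigr)$ when $y\geq x$. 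I would then insert the standard asymptotics
\[
K_{\alpha}(z)=\sqrt{\tfrac{\pi}{2z}}\,e^{-z}\bigl(1+O(z^{-1})\bigr),\qquad I_{\alpha}(z)=\tfrac{e^{z}}{\sqrt{2\pi z}}\bigl(1+O(z^{-1})\bigr),
\]
at $z=\tfrac{\sqrt{\lambda}}{p+1}r^{p+1}$ for $r\in\{\sqrt{x},\sqrt{y}\}$. The algebraic factors collected in the ratio are of order $\lambda^{-1/4}$, so after taking $\log$ and dividing by $\sqrt{\lambda}$ they drop out and only the exponential part survives, yielding
\[
\lim_{\lambda\to\infty}\lambda^{-1/2}\log Q^{\delta}_x\!\left[\exp\!\left(-\tfrac{\lambda}{2}\Sigma^{\delta}_{p,x,y}\right)\right]=-\tfrac{1}{p+1}\bigl|x^{(p+1)/2}-y^{(p+1)/2}\bigr|.
\]
Rescaling $\lambda\mapsto 2\lambda$ and recalling $\Sigma^{\delta}_{p,x,y}=\|X\|_{p,y}^{p}$ gives the first assertion.

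For the second assertion I invoke the de~Bruijn/Kasahara Tauberian theorem in the following form: if $Y\geq 0$ satisfies $-\log\bbE[e^{-\lambda Y}]\sim c\lambda^{1/2}$ as $\lambda\to\infty$, then $-\log\bbP[Y<\eps]\sim c^{2}/(4\eps)$ as $\eps\downarrow 0$. The $\geq$ direction is immediate from Chebyshev after optimising $\sup_{\lambda>0}\bigl(c\lambda^{1/2}-\lambda\eps\bigr)=c^{2}/(4\eps)$; the matching upper bound is the Tauberian content (see, e.g., Bingham--Goldie--Teugels, \emph{Regular Variation}). Applying this with $Y=\Sigma^{\delta}_{p,x,y}$ and $c=\tfrac{\sqrt{2}}{p+1}\bigl|x^{(p+1)/2}-y^{(p+1)/2}\bigr|$ produces
\[
\log Q^{\delta}_x\!\bigl[\Sigma^{\delta}_{p,x,y}<\eps\bigr]\sim -\frac{(x^{(p+1)/2}-y^{(p+1)/2})^{2}}{2(p+1)^{2}\,\eps},
\]
and the substitution $\eps\mapsto\eps^{p}$, which converts $\Sigma^{\delta}_{p,x,y}<\eps^{p}$ into $\|X\|_{p,y}<\eps$, gives the stated small ball limit.

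The one delicate technical point is the boundary case $x=0$ (and, when $\nu<0$, $y=0$), where the Bessel function argument degenerates and the large-$z$ expansion does not apply. There one uses instead the series representation \eqref{e:mbdef}: $z^{-\alpha}I_{\alpha}(z)$ and $z^{\alpha}K_{\alpha}(z)$ possess explicit finite nonzero limits as $z\downarrow 0$, which combined with the $r^{-\nu/2}$ prefactor keeps $u_{i}(\sqrt{r})\,r^{-\nu/2}$ bounded and bounded below at the endpoint; the rest of the large-$\lambda$ analysis then proceeds verbatim. The Tauberian step needs no regularity beyond the Laplace asymptotic just derived, so the proof is complete once the Bessel estimates are in place.
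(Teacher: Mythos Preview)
Your proposal is correct and follows essentially the same route as the paper: express the Laplace transform of $\Sigma^{\delta}_{p,x,y}$ as a ratio of modified Bessel functions via the results of Section~\ref{s:2}, extract the leading exponential behaviour as $\lambda\to\infty$, and then invert via de~Bruijn's exponential Tauberian theorem. The only cosmetic difference is that you quote the classical large-argument asymptotics for $K_{\alpha}$ and $I_{\alpha}$ directly, whereas the paper rederives the needed limits $\log K_{\alpha}(z)/z\to -1$ and $\log I_{\alpha}(z)/z\to 1$ from the integral representations \eqref{e:intI}--\eqref{e:intK} (and, for negative index $I_{\alpha}$, from the connection formula \eqref{e:mbdef}); one minor slip is that in the regime $\nu<0,\ y\geq x$ the paper's $u_1$ involves $I_{\nu/(p+1)}$ rather than $I_{|\nu|/(p+1)}$, but this is harmless since the large-$z$ asymptotic of $I_{\alpha}$ is independent of $\alpha$.
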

\begin{proof} Let $w(x)=u(\sqrt{x})x^{-\frac{\nu}{2}}$ where $u=u_0$ (resp. $u=u_1$) for $y \leq z$ (resp. $y >z$) and $u_0$ and $u_1$ are as defined in Theorems \ref{t:numinus} and \ref{t:numinus2}, respectively. Then, it follows from the results of the previous section that
\[
\sqrt{2}\lambda^{-\frac{1}{2}} \log Q_z^{\delta}\left[\exp\left(-\frac{\lambda}{2}\|X\|^p_{p,y}\right)\right]=\sqrt{2}\frac{\log w(z)}{\sqrt{\lambda}}-\sqrt{2}\frac{\log w(y)}{\sqrt{\lambda}}.
\]
Moreover, when $u=u_0$,
\bean
\lim_{\lambda \rar \infty} \frac{\log w(x)}{\sqrt{\lambda}}&=&\lim_{\lambda \rar \infty} \frac{\log u_0(\sqrt{x})}{\sqrt{\lambda}}\\
&=&\frac{x^{\frac{p+1}{2}}}{p+1}\lim_{\lambda \rar \infty} \frac{\log K_{\frac{|\nu|}{p+1}}\left(\frac{\sqrt{\lambda}}{p+1}x^{\frac{p+1}{2}}\right)}{\sqrt{\lambda}}\frac{p+1}{x^{\frac{p+1}{2}}}\\
&=&\frac{x^{\frac{p+1}{2}}}{p+1}\lim_{\lambda \rar \infty} \frac{\log K_{\frac{|\nu|}{p+1}}\left(\lambda\right)}{\lambda}.
\eean
However, using the asymptotic expansions in  (\ref{e:MBasymp})  we obtain that for any $\alpha \geq 0$,
\[
\lim_{\lambda \rar \infty} \frac{\log K_{\alpha}(\lambda)}{\lambda}=-1.
\] 
This shows that when $y \leq z$
\[
\lim_{\lambda \rar \infty} \lambda^{-\frac{1}{2}}\log Q_z^{\delta}\left[\exp\left(-\lambda \|X\|^p_{p,y}\right)\right]=-\frac{\sqrt{2}}{p+1}\left|z^{\frac{p+1}{2}}-y^{\frac{p+1}{2}}\right|.
\]
In order to show the above limit when $y>x$, it suffices to show that
$\lim_{x \rar \infty} \frac{\log
  I_{\frac{\nu}{p+1}}\left(x\right)}{x}=1$, which again follows from (\ref{e:MBasymp}). This completes the proof of the first assertion of
the theorem.

The second assertion follows by applying de Bruijn's exponential Tauberian theorem (see Theorem 4.12.9 in \cite{RegVar}) to $\alpha =-1$ and $\beta=  \frac{1}{2(p+1)^2}\left(z^{\frac{p+1}{2}}-y^{\frac{p+1}{2}}\right)^2.$
\end{proof}

Observe that $\Sigma^{\delta}_{p,0,y}$ is an increasing process when indexed by $y$. We will next use the above theorem to obtain Chung's LIL for
$(\Sigma^{\delta}_{p,0,y})_{y \geq 0}$.
\begin{theorem} \label{t:clil} Let $\phi(y):=\frac{y^{p+1}}{\log \log y}$. Then, for any $\nu > -1$ and $p>0$ one has
\[
\liminf_{y \rar \infty}
\frac{\Sigma^{\delta}_{p,0,y}}{\phi(y)}=\frac{1}{2(p+1)^2}, \,  Q^{\delta}_0\mbox{-a.s.}.
\]
\end{theorem}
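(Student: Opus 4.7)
I would first combine Theorem \ref{t:sbp} with the scaling identity $\Sigma^{\delta}_{p,0,y}\eid y^{p+1}\Sigma^{\delta}_{p,0,1}$ from Propositions \ref{p:scalingn} and \ref{p:scalingp} to deduce
\[
\log Q^{\delta}_0\left[\Sigma^{\delta}_{p,0,y}<c\,\phi(y)\right]=-\frac{\log\log y}{2(p+1)^2 c}\bigl(1+o(1)\bigr)\qquad\text{as }y\rar\infty.
\]
For the lower bound $\liminf\geq\frac{1}{2(p+1)^2}$, fix $c<\frac{1}{2(p+1)^2}$ and set $y_n=\theta^n$ with $\theta>1$: the above makes these probabilities summable (exponent $\frac{1}{2(p+1)^2 c}>1$), so the first Borel--Cantelli lemma yields $\Sigma^{\delta}_{p,0,y_n}\geq c\phi(y_n)$ eventually a.s.; monotonicity of $y\mapsto\Sigma^{\delta}_{p,0,y}$ upgrades this to $\liminf_{y\rar\infty}\Sigma^{\delta}_{p,0,y}/\phi(y)\geq c\theta^{-(p+1)}$, and letting $\theta\dar 1$ and $c\uar\frac{1}{2(p+1)^2}$ closes this half.

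For the upper bound $\liminf\leq\frac{1}{2(p+1)^2}$ I would exploit independence: by the strong Markov property applied at $R_{y_{k-1}}$, the increments $\Delta_k:=\int_{R_{y_{k-1}}}^{R_{y_k}}X_s^p\,ds$ are independent with $\Delta_k$ distributed as $\Sigma^{\delta}_{p,y_{k-1},y_k}$ started at $y_{k-1}$. Fix $c>\frac{1}{2(p+1)^2}$, pick $c'\in\bigl(\frac{1}{2(p+1)^2},c\bigr)$, choose $a\in(1,2(p+1)^2 c')$, and take $y_n=\exp(n^a)$ with $K_n:=y_n/y_{n-1}$. Define $E_n:=\{\Delta_n<c'\phi(y_n)\}$ and $F_n:=\{\Sigma^{\delta}_{p,0,y_{n-1}}\leq(c-c')\phi(y_n)\}$; on $E_n\cap F_n$ one has $\Sigma^{\delta}_{p,0,y_n}<c\phi(y_n)$. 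Scaling and Markov's inequality bound $Q^{\delta}_0[F_n^c]$ by $C\log\log y_n/K_n^{p+1}$, which is summable since $K_n^{p+1}=e^{(p+1)(n^a-(n-1)^a)}$ grows super-polynomially when $a>1$; thus $F_n$ holds for all large $n$. (Finiteness of $\bbE[\Sigma^{\delta}_{p,0,1}]$ follows from the finite Laplace transforms at positive $\lambda$ established in Section \ref{s:2}.) For $E_n$, scaling identifies $\Delta_n$ in law with $y_n^{p+1}\Sigma^{\delta}_{p,1/K_n,1}$, and the pathwise decomposition $\Sigma^{\delta}_{p,0,1}=\Sigma^{\delta}_{p,0,1/K_n}+\int_{R_{1/K_n}}^{R_1}X_s^p\,ds$ combined with the strong Markov property at $R_{1/K_n}$ shows that $\Sigma^{\delta}_{p,1/K_n,1}$ under $Q^{\delta}_{1/K_n}$ is stochastically dominated by $\Sigma^{\delta}_{p,0,1}$ under $Q^{\delta}_0$; hence
\[
Q^{\delta}_0[E_n]\geq Q^{\delta}_0\left[\Sigma^{\delta}_{p,0,1}<c'/\log\log y_n\right]=\exp\left(-\frac{a\log n}{2(p+1)^2 c'}\bigl(1+o(1)\bigr)\right),
\]
which is not summable by the choice of $a$. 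The second Borel--Cantelli lemma applied to the independent family $(E_n)$ then gives $E_n$ infinitely often, so $\Sigma^{\delta}_{p,0,y_n}<c\phi(y_n)$ i.o.\ a.s., and $c\dar\frac{1}{2(p+1)^2}$ completes the argument.

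The one genuinely delicate point is that Theorem \ref{t:sbp} is stated with both endpoints fixed, while bounding $Q^{\delta}_0[E_n]$ directly would require uniformity of the small-ball asymptotic as the starting point $1/K_n\rar 0$. The stochastic-domination step above sidesteps this issue entirely, at the harmless cost of replacing the factor $(1-K_n^{-(p+1)/2})^2$ that would otherwise appear in the exponent by its limiting value $1$; everything else is standard Borel--Cantelli bookkeeping.
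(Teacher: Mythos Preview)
Your argument is correct and follows the same Borel--Cantelli architecture as the paper: a geometric subsequence with BC~I and monotonicity for the lower bound, and BC~II on the independent increments $\Delta_n=\int_{R_{y_{n-1}}}^{R_{y_n}}X_s^p\,ds$ for the upper bound, with the remainder $\Sigma^{\delta}_{p,0,y_{n-1}}/\phi(y_n)$ shown to be negligible. Two small points are worth noting.

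First, the sentence ``finiteness of $\bbE[\Sigma^{\delta}_{p,0,1}]$ follows from the finite Laplace transforms at positive $\lambda$'' is not a valid justification: any nonnegative random variable has Laplace transform bounded by $1$ on $(0,\infty)$, yet may have infinite mean (e.g.\ the one-sided $\tfrac12$-stable law with transform $e^{-\sqrt{\lambda}}$). The fact you need is true, though, and is obtained by differentiating the explicit transform $w(0)/w(1)$ from Section~\ref{s:2} at $\lambda=0$, which gives $\bbE_{Q^{\delta}_0}[\Sigma^{\delta}_{p,0,1}]=\frac{1}{2(p+1)(p+\nu+1)}<\infty$ for $\nu>-1,\,p>0$. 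With this correction your Markov\,+\,BC~I treatment of $F_n$ goes through; the paper instead handles the remainder by the Fatou observation $\liminf_{y}\Sigma^{\delta}_{p,0,y}/y^{p+1}<\infty$ a.s.\ together with $y_{n-1}^{p+1}/\phi(y_n)\to 0$, using $y_n=n^n$.

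Second, your stochastic-domination detour for $E_n$ is correct but unnecessary. The paper simply uses the pathwise inequality $\Delta_n\le \Sigma^{\delta}_{p,0,y_n}$ under $Q^{\delta}_0$ (the integrand is nonnegative and the integration interval is larger), which already gives
\[
Q^{\delta}_0[E_n]\;\ge\;Q^{\delta}_0\!\left[\Sigma^{\delta}_{p,0,y_n}<c'\phi(y_n)\right]\;=\;Q^{\delta}_0\!\left[\Sigma^{\delta}_{p,0,1}<\frac{c'}{\log\log y_n}\right],
\]
exactly the bound you reach after scaling and domination. These are cosmetic differences; the substance of your proof matches the paper's.
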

\begin{proof} It follows form Theorem \ref{t:sbp} that
\[
\lim_{\eps\rar 0}\eps \log
Q^{\delta}_0\left[\Sigma^{\delta}_{p,0,1}<\eps\right]=-\frac{1}{2(p+1)^2},
\]
thus, for sufficiently small $\eps$,
\[
Q^{\delta}_0\left[\Sigma^{\delta}_{p,0,1}<\eps\right]\leq
\exp\left(-\frac{K}{\eps}\right),
\]
where $K$ is a fixed, but arbitrary, constant in $(0,
\frac{1}{2(p+1)^2})$. Fix $C>1$ and set $y_n=C^n$. Next choose $k>0$
so that $k C^{p+1} <K$,  In view of
Propositions \ref{p:scalingn} and \ref{p:scalingp}, we get for all
large $n$
\bean
Q^{\delta}_0\left[\Sigma^{\delta}_{p,0,y_{n}}<k \phi(y_{n+1})\right]&=&
Q^{\delta}_0\left[\Sigma^{\delta}_{p,0,1}<\frac{k C^{p+1}}{\log \log y_{n+1}}
\right]\\
&\leq& \exp\left(-\frac{K}{k C^{p+1}}\log \left((n+1) \log C\right)\right)=(\log
C)^{-\frac{K}{k C^{p+1}}} (n+1)^{-\frac{K}{k C^{p+1}}},
\eean
which is summable in $n$. Therefore, by the first Borel-Cantelli
lemma, we have that, a.s. for large $n$,
$\frac{\Sigma^{\delta}_{p,0,y_n}}{\phi(y_{n+1})} \geq k$. On the
other hand, for $y \in [y_n, y_{n+1}]$,
\[
\Sigma^{\delta}_{p,0,y} \geq \Sigma^{\delta}_{p,0,y_n}\geq k
\phi(y_{n+1})\geq k \phi(y),
\]
which shows that
\[
\liminf_{y \rar \infty}
\frac{\Sigma^{\delta}_{p,0,y}}{\phi(y)}\geq k, \,  Q^{\delta}_0\mbox{-a.s.},
\]
and thus
\[
\liminf_{y \rar \infty}
\frac{\Sigma^{\delta}_{p,0,y}}{\phi(y)}\geq \frac{1}{2(p+1)^2}, \,  Q^{\delta}_0\mbox{-a.s.},
\]
by the arbitrariness of $C, K$ and $k$.

We now turn to prove the reverse inequality.  First, let's observe
that \be \label{e:finli}
\liminf_{y \rar \infty} \frac{\Sigma^{\delta}_{p,0,y}}{y^{p+1}} < \infty,\,  Q^{\delta}_0\mbox{-a.s.}.
\ee
The above claim follows from a direct
  application of Fatou's lemma since $\Sigma^{\delta}_{p,0,y}\eid
  y^{p+1}\Sigma^{\delta}_{p,0,1}$.

 Next, fix an $\eps>0$, let $y_n=n^n$ and consider the
  events
\[
E_n:=\left[ \int_{R_{y_{n-1}}}^{R_{y_n}}X_s^p\,ds \leq (1+ 2
  \eps)\frac{1}{2(p+1)^2}\phi(y_n)\right].
\]
It follows from the strong Markov property of $X$ that $E_n$s are
independent, and we will now see that $E_n$s occur infinitely often
due to the second Borel-Cantelli lemma. Indeed, by the definition
of $\Sigma^{\delta}_{p,0,y_n} $, we obtain
\bean
Q^{\delta}_0(E_n)& \geq&
Q^{\delta}_0\left[\Sigma^{\delta}_{p,0,y_n}\leq (1+ 2
  \eps)\frac{1}{2(p+1)^2}\phi(y_n)\right]\\
&=& Q^{\delta}_0\left[\Sigma^{\delta}_{p,0,1}\leq (1+ 2
  \eps)\frac{1}{2(p+1)^2 \log\log y_n}\right]\\
&\geq&\exp\left(-\frac{1+\eps}{1+2\eps}\log\log y_n\right)\geq
\frac{1}{\log y_n},
\eean
where the second inequality is due to the fact that, for a given
$\eps>0$, $Q^{\delta}_0 \left[\Sigma^{\delta}_{p,0,1}\leq
  \eta\right]\geq \exp\left(-(1+\eps)\frac{1}{2(p+1)^2
  }\frac{1}{\eta}\right)$ for sufficiently small
$\eta$ in view of the convergence result of Theorem
\ref{t:sbp}. Since $\frac{1}{n \log n}$ is not summable, it follows
from the Borel-Cantelli lemma that $E_n$ occurs infinitely often. As
$\eps$ was arbitrary this allows us to conclude, a.s.,
\[
\liminf_{n \rar \infty}
\frac{\int_{R_{y_{n-1}}}^{R_{y_n}}X_s^p\,ds}{\phi(y_n)}\leq
\frac{1}{2(p+1)^2}.
\]
Thus, $Q^{\delta}_0$-a.s.,
\be  \label{e:finli2}
\liminf_{n \rar \infty}
\frac{\Sigma^{\delta}_{p,0,y_n}}{\phi(y_n)}\leq \liminf_{n \rar \infty}
\frac{\Sigma^{\delta}_{p,0,y_{n-1}}}{\phi(y_n)} +\frac{1}{2(p+1)^2}=\liminf_{n \rar \infty}
\frac{\Sigma^{\delta}_{p,0,y_{n-1}}}{y^{p+1}_{n-1}}\frac{y^{p+1}_{n-1}}{\phi(y_n)} +\frac{1}{2(p+1)^2}.
\ee
On the other hand,
\[
\frac{y_{n-1}^{p+1}}{\phi(y_n)} \leq \frac{\log \log n^{n(p+1)}
  }{n^{p+1}},
\]
which converges to $0$ as $n\rar \infty$. Therefore, in view of
(\ref{e:finli}) and (\ref{e:finli2}), we obtain
\[
\liminf_{n \rar \infty}
\frac{\Sigma^{\delta}_{p,0,y_n}}{\phi(y_n)}\leq \frac{1}{2(p+1)^2}, \,  Q^{\delta}_0\mbox{-a.s.}.
\]
\end{proof}
\begin{remark} We can in fact extend the above result so that for all $z \geq 0$
\[
\liminf_{y \rar \infty}
\frac{\Sigma^{\delta}_{p,z,y}}{\phi(y)}=\frac{1}{2(p+1)^2}, \,  Q^{\delta}_z\mbox{-a.s.}.
\]
Indeed, using the strong Markov property of $X$ we have the decomposition
\[
\Sigma^{\delta}_{p,0,y}=\Sigma^{\delta}_{p,0,z}+ \Sigma^{\delta}_{p,z,y}
\]
where $\Sigma^{\delta}_{p,0,z}$ and $ \Sigma^{\delta}_{p,z,y}$ are independent. Dividing both sides by $\phi(y)$ and letting $y \rar \infty$ yields the result. 

 Another formal check to this result can be performed by observing 
\[
\Sigma^{\delta}_{p,z,y}\eid y^{p+1}\Sigma^{\delta}_{p,\frac{z}{y},1}\eid y^{p+1}\Sigma^{\delta}_{p,0,1}\frac{\Sigma^{\delta}_{p,\frac{z}{y},1}}{\Sigma^{\delta}_{p,0,1}}\eid\Sigma^{\delta}_{p,0,y}\frac{\Sigma^{\delta}_{p,\frac{z}{y},1}}{\Sigma^{\delta}_{p,0,1}},
\]
and that  $\frac{\Sigma^{\delta}_{p,\frac{z}{y},1}}{\Sigma^{\delta}_{p,0,1}}$ converges to $1$ as $y \rar \infty$ once we identify $\Sigma^{\delta}_{p,\frac{z}{y},1}$ with
\[
\int_{R_{\frac{z}{y}}}^{R_1} X_s^p \, ds,
\]
where $X$ is $BESQ^{\delta}(0)$. 
\end{remark}
\section{Feller property and `time reversal'}
In the previous section we have proved a law of iterated logarithm for $\Sigma^{\delta}_{p,0,y}$ by considering it as a process indexed by $y$. In this section we will  see, for $\nu\geq 0$,  that it is in fact an {\em inhomogeneous} Feller process and find its infinitesimal generator.

First of all, it immediately follows from the strong Markov property of $X$ that $(\Sigma^{\delta}_{p,0,y}, \cF_{R_y})_{y \geq 0}$ is Markov. Suppose $P_{z,y}$ is the associated semigroup, i.e. $P_{z,y}f(a)=Q^{\delta}_0[f(\Sigma^{\delta}_{p,0,y})|\Sigma^{\delta}_{p,0,z}=a].$ Since the increments of $(\Sigma^{\delta}_{p,0,y})_{y \geq 0}$ are independent, we have for any bounded measurable $f$
\be \label{tfS}
P_{z,y}f(a)=\int_0^{\infty} f(a+b)Q^{\delta}_z(\Sigma^{\delta}_{p,z,y}\in db).
 \ee
Let $C_0$ denote the class of continuous functions on $\bbR_+$ that vanish at  $\infty$. (\ref{tfS}) readily implies that when $f \in C_0$, $P_{z,y}f \in C_0$ as well. Moreover,  it follows from Corollary \ref{c:LTxy+}, and the observation that $u_1$ is finite at $0$, that for each $z \geq 0$ the measure $Q^{\delta}_z(\Sigma^{\delta}_{p,z,y}\in db)$ converges weakly to the Dirac point mass at $0$ as $y \dar z$ since its Laplace transform converges to $1$. Therefore, $\lim_{y \dar z}P_{z,y}f(a)=f(a)$ and consequently $(\Sigma^{\delta}_{p,0,y}, \cF_{R_y})_{y \geq 0}$ is Feller.

The form of the infinitesimal generator of $(\Sigma^{\delta}_{p,0,y}, \cF_{R_y})_{y \geq 0}$ will follow from the following theorem.

\begin{theorem} \label{t:ig1} Suppose that $\nu \geq 0$. Then, for every $x \geq 0$ there exists a decreasing function $\ol{\pi}(x, \cdot)$  satisfying
\[
\int_0^{\infty}e^{-\frac{\lambda}{2} b}\ol{\pi}(x,b)db=\frac{2}{\lambda}\frac{w'(x)}{w(x)}, \qquad \lambda \in (0,\infty),
\]
where $w(x):=x^{-\frac{\nu}{2}}I_{\frac{\nu}{p+1}}\left(\frac{\sqrt{\lambda}}{p+1}x^{\frac{p+1}{2}}\right)$.
Moreover,
\be
\lim_{y \dar z} \frac{Q^{\delta}_0\left[\exp\left(-\lambda \Sigma^{\delta}_{p,0,y}\right)\bigg|\Sigma^{\delta}_{p,0,z}=a\right]-e^{-\lambda a}}{y-z}=\int_0^{\infty}\left\{e^{-\lambda (a+b)} -e^{-\lambda a}\right\}\pi(z,db),
\ee
where $\pi(z, db):=-\ol{\pi}(z, db)$ for $b \geq 0$. In particular,
\[
\int_0^1 b \pi(z,db)<\infty.
\]
\end{theorem}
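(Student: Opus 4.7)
My plan is to combine the explicit Laplace transform of Corollary~\ref{c:LTxy+} with the Hadamard product expansion of the modified Bessel function $I_\alpha$ in order to read off $\ol\pi(x,\cdot)$ as a series of exponentials.

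By the strong Markov property at $R_x$---which also gives the independent-increments property of $(\Sigma^{\delta}_{p,0,y})_{y\geq 0}$ noted at the start of Section~4---and by Corollary~\ref{c:LTxy+}(i) applied with parameter $2\lambda$ in place of $\lambda$,
\[
Q^{\delta}_0\!\left[e^{-\lambda\Sigma^{\delta}_{p,0,y}}\,\big|\,\Sigma^{\delta}_{p,0,x}=a\right]=e^{-\lambda a}\,\frac{\widetilde w(x)}{\widetilde w(y)},\qquad \widetilde w(z):=z^{-\nu/2}I_{\nu/(p+1)}\!\Bigl(\tfrac{\sqrt{2\lambda}}{p+1}z^{(p+1)/2}\Bigr).
\]
Since $I_{\nu/(p+1)}$ is smooth and strictly positive on $(0,\infty)$, differentiating in $y$ at $y=x$ produces the limit $-e^{-\lambda a}\,\widetilde w'(x)/\widetilde w(x)$, and all that remains is to recognise this as $e^{-\lambda a}\int_0^\infty(e^{-\lambda b}-1)\pi(x,db)$ for a suitable measure $\pi(x,\cdot)$.

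For the latter I would exploit the Hadamard factorisation
\[
I_\alpha(z)=\frac{(z/2)^\alpha}{\Gamma(\alpha+1)}\prod_{k\geq 1}\!\left(1+\frac{z^{2}}{j_{\alpha,k}^{2}}\right),\qquad \alpha=\tfrac{\nu}{p+1},
\]
where $j_{\alpha,k}>0$ are the (real, simple) positive zeros of $J_\alpha$. Logarithmic differentiation combined with the recurrence $I'_\alpha=I_{\alpha+1}+(\alpha/z)I_\alpha$ already exploited in Theorem~\ref{t:numinus2} yields the Mittag-Leffler expansion $I_{\alpha+1}(z)/I_\alpha(z)=\sum_{k\geq 1}2z/(z^{2}+j_{\alpha,k}^{2})$. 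Substituting this into $w'(x)/w(x)=\tfrac{(p+1)z}{2x}I_{\alpha+1}(z)/I_\alpha(z)$---a direct consequence of the formula for $w'$ derived in the proof of Theorem~\ref{t:numinus2}---and re-expressing $z$ in terms of $\lambda$ and $x$ leads, after elementary algebra, to the partial-fraction identity
\[
\frac{2}{\lambda}\frac{w'(x)}{w(x)}=\frac{p+1}{x}\sum_{k\geq 1}\frac{2}{\lambda+\nu_k(x)},\qquad \nu_k(x):=\frac{(p+1)^{2}j_{\alpha,k}^{2}}{x^{p+1}}.
\]
Because $2/(\lambda+\nu_k)=\int_0^\infty e^{-\lambda b/2}e^{-\nu_k b/2}\,db$ and every summand is nonnegative, Tonelli identifies
\[
\ol\pi(x,b):=\frac{p+1}{x}\sum_{k\geq 1}e^{-\nu_k(x)b/2},
\]
which is visibly strictly decreasing in $b$ and finite for every $b>0$ by the growth $\nu_k\sim k^2\pi^2(p+1)^2/x^{p+1}$. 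Setting $\pi(x,db):=-d\ol\pi(x,b)$ defines a positive measure, and a single integration by parts (using $\ol\pi(x,\infty)=0$) converts the Laplace-transform identity into $\widetilde w'(x)/\widetilde w(x)=\int_0^\infty(1-e^{-\lambda b})\pi(x,db)$ after the reparametrisation $\lambda\leftrightarrow 2\lambda$ between the convention for $w$ and that for $e^{-\lambda\Sigma}$; combined with the preceding paragraph this yields the generator formula.

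Finally, Rayleigh's classical series $\sum_{k\geq 1}j_{\alpha,k}^{-2}=1/(4(\alpha+1))$ gives $\int_0^\infty\ol\pi(x,b)\,db=x^p/\bigl(2(\nu+p+1)\bigr)<\infty$. Since $\ol\pi(x,\cdot)$ is decreasing, $b\,\ol\pi(x,b)\leq\int_0^b\ol\pi(x,u)\,du\to 0$ as $b\downarrow 0$, so integrating by parts on $(0,1]$ yields $\int_0^1 b\,\pi(x,db)=\int_0^1\ol\pi(x,b)\,db-\ol\pi(x,1)<\infty$. The principal obstacle is not mathematical depth but bookkeeping: one must keep the $\lambda\leftrightarrow 2\lambda$ reparametrisation straight throughout and justify the term-by-term Laplace inversion, the latter being immediate by Tonelli thanks to positivity. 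The Hadamard factorisation itself is classical and only needs to be cited.
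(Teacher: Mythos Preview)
Your argument is correct and takes a genuinely different route from the paper's. The paper proceeds probabilistically: it writes $Q^{\delta}_x\bigl[e^{-\frac{\lambda}{2}\Sigma^{\delta}_{p,x,y}}\bigr]-1=-\frac{\lambda}{2}\int_0^{\infty}e^{-\frac{\lambda}{2}b}Q^{\delta}_x(\Sigma^{\delta}_{p,x,y}>b)\,db$, normalises the tail measures $Q^{\delta}_x(\Sigma^{\delta}_{p,x,y}>b)\,db$ into probabilities using the explicit first moment $Q^{\delta}_x[\Sigma^{\delta}_{p,x,y}]=\frac{y^{p+1}-x^{p+1}}{2(p+1)(p+\nu+1)}$ (computed from the speed measure and Green's function), and then passes to a weak limit as $y\downarrow x$. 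The limiting measure is shown to be of the form $c\,\eps_0(db)+{\rm const}\cdot\ol\pi(x,b)\,db$ with $\ol\pi$ decreasing, and the atom $c$ is ruled out via the large-argument asymptotics $\frac{1}{z}\frac{I'_\alpha(z)}{I_\alpha(z)}\to 0$. Finiteness of $\int_0^1 b\,\pi(x,db)$ is inferred indirectly from finiteness of the Laplace transform.

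Your approach replaces this soft analysis by the Hadamard factorisation of $I_\alpha$ and the resulting Mittag--Leffler expansion $I_{\alpha+1}/I_\alpha=\sum_k 2z/(z^2+j_{\alpha,k}^2)$, which yields the \emph{explicit} density $\ol\pi(x,b)=\frac{p+1}{x}\sum_{k\geq 1}e^{-\nu_k(x)b/2}$. This is more constructive: monotonicity, the absence of an atom, and the integrability $\int_0^\infty\ol\pi(x,b)\,db=\frac{x^p}{2(\nu+p+1)}$ (via Rayleigh's identity) are all read off directly, and the boundary term $b\,\ol\pi(x,b)\to 0$ needed for the integration by parts follows from this integrability together with monotonicity, exactly as you say. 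The trade-off is that the paper's argument is more robust---it would transfer to situations where no product formula is available---whereas yours gives strictly more information (an explicit L\'evy density) at the cost of citing classical special-function identities. One small point to flag: your series formula is written for $x>0$; the endpoint $x=0$ still requires a separate limiting argument, which the paper also relegates to a footnote.
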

\begin{proof}
In view of the strong Markov property of $X$,
\[
Q^{\delta}_0\left[\exp\left(-\frac{\lambda}{2} \Sigma^{\delta}_{p,0,y}\right)\bigg|\Sigma^{\delta}_{p,0,z}=a\right]=e^{-\frac{\lambda}{2} a}Q^{\delta}_z\left[\exp\left(-\frac{\lambda}{2}  \Sigma^{\delta}_{p,z,y}\right)\right].
\]

Thus, it follows from Corollary \ref{c:LTxy+} that
\bea
\lim_{y \dar z} \frac{Q^{\delta}_0\left[\exp\left(-\frac{\lambda}{2}  \Sigma^{\delta}_{p,0,y}\right)\bigg|\Sigma^{\delta}_{p,0,z}=a\right]-e^{-\frac{\lambda}{2}  a}}{y-z}&=&e^{-\frac{\lambda}{2}  a}\lim_{y \dar z}\frac{Q^{\delta}_z\left[\exp\left(-\frac{\lambda}{2}  \Sigma^{\delta}_{p,z,y}\right)\right]-1}{y-z}  \nn \\
&=&e^{-\frac{\lambda}{2}  a}\lim_{y \dar z} \frac{\frac{w(z)}{w(y)}-1}{y-z}=-e^{-\frac{\lambda}{2}a}\frac{w'(z)}{w(z)}. \label{e:frhs}
\eea
On the other hand, using integration by parts, we obtain
\bea
Q^{\delta}_z\left[\exp\left(-\frac{\lambda}{2} \Sigma^{\delta}_{p,z,y}\right)\right]-1&=&\int_0^{\infty}e^{-\frac{\lambda}{2} b}Q^{\delta}_z(\Sigma^{\delta}_{p,z,y} \in db)-1 \nn \\&=&-\frac{\lambda}{2}  \int_0^{\infty}e^{-\frac{\lambda}{2}  b}Q^{\delta}_z(\Sigma^{\delta}_{p,z,y} >b)\, db. \label{e:lap}
\eea
It is well-known (see Corollary 3.8 in Chap.~VII of \cite{ry}) that
\[
Q^{\delta}_z[\Sigma^{\delta}_{p,z,y}]=\int_0^y x^p G(z,x) m(dx),
\]
where $m$ and $G$ are the associated speed measure and Green's function, respectively. In our case, these are given by
\[
m(dx)=\frac{x^{\nu}}{2 \nu} dx; \qquad G(z,x)= -y^{-\nu}+ (x \vee z)^{-\nu}.
\]
Consequently, the above formula yields
\[
Q^{\delta}_z[\Sigma^{\delta}_{p,z,y}]=\frac{y^{p+1}-z^{p+1}}{2 (p+1)(p+\nu +1)},
\]
which in particular implies that $\Pi(z,y, db):= \frac{2 (p+1)(p+\nu
  +1)}{y^{p+1}-z^{p+1}}Q^{\delta}_z(\Sigma^{\delta}_{p,z,y} >b) db$ is
a probability measure on $[0,\infty)$ for each $(z,y)$. However,
(\ref{e:frhs}) and (\ref{e:lap}) imply that $\Pi(z,y, \cdot)$ converges
weakly as $y$ tends to $z$ to some probability measure, $\ol{\Pi}(z,\cdot)$
on $[0,\infty)$ which satisfies\footnote{Using the integral representation of $I_{\alpha}$ for $\alpha > -\half$, it is tedious but straightforward to  check that this representation holds for $z=0$ as well by taking the limit as $z \rar 0$ and showing that $L(\lambda) < \infty$ for $\lambda >0$.}
\be \label{e:PiL}
L\left(\frac{\lambda}{2}\right):=\int_0^{\infty}e^{-\frac{\lambda}{2} b}\ol{\Pi}(z,db)=\frac{4(p+\nu+1)}{\lambda}z^{-p}\frac{w'(z)}{w(z)}.
\ee
Moreover, since $Q^{\delta}_z(\Sigma^{\delta}_{p,z,y} >b)$ is
decreasing in $b$ for each $(z,y)$, the limiting measure $\ol{\Pi}$ is necessarily of the form
\[
c\eps_0(db) + 2(p+\nu+1)z^{-p}\overline{\pi}(z,b)\,db,
\]
where $\eps_0$ is the Dirac point mass at $0$, $c$ a nonnegative
constant, and $\overline{\pi}(z,\cdot)$ is a decreasing function for
each $x$. In particular, $\overline{\pi}(z,\infty)=0$.  In order to
find the constant $c$, it suffices to check the value of the function
$L$ at $\infty$.  However, using the explicit form of $w$,
\[
c=\lim_{\lambda \rar
  \infty}\frac{4(p+\nu+1)}{\lambda}z^{-p}\frac{w'(z)}{w(z)}=\lim_{\lambda
  \rar \infty}2 \frac{p+\nu+1}{\sqrt{\lambda} z^{\frac{p+1}{2}}}\frac{I'_{\frac{\nu}{p+1}}\left(\frac{\sqrt{\lambda}}{p+1}z^{\frac{p+1}{2}}\right)}{I_{\frac{\nu}{p+1}}\left(\frac{\sqrt{\lambda}}{p+1}z^{\frac{p+1}{2}}\right)}
=0
\]
since
\[
\lim_{x \rar \infty}\frac{1}{x}\frac{I'_{\alpha}(x)}{I_{\alpha}(x)}=2\lim_{x \rar \infty} \frac{\log I_{\alpha}(x)}{x^2}=0
\]
due to (\ref{e:MBasymp}). 

Thus, we have shown that
\[
-\frac{w'(z)}{w(z)} =\lim_{y \dar z}\frac{Q^{\delta}_z\left[\exp\left(-\frac{\lambda}{2}  \Sigma^{\delta}_{p,z,y}\right)\right]-1}{y-z}=-\frac{\lambda}{2} \int_0^{\infty}e^{-\frac{\lambda}{2}  b} \overline{\pi}(z, b)\,db.
\]
Since $\ol{\pi}$ is decreasing with $\ol{\pi}(z,\infty)=0$, we obtain by integrating by parts
\[
\lim_{y \dar z} \frac{Q^{\delta}_0\left[\exp\left(-\frac{\lambda}{2}  \Sigma^{\delta}_{p,0,y}\right)\bigg|\Sigma^{\delta}_{p,0,z}=a\right]-e^{-\frac{\lambda}{2}  a}}{y-z}=e^{-\frac{\lambda}{2}  a}\int_0^{\infty}\left\{e^{-\frac{\lambda}{2}  b}-1\right\}\pi(z,db)
\]
where $\pi(z,db)=-\ol{\pi}(z,db)$. Finally, note that one necessarily has
\[
\int_0^1 b \pi(z,db)<\infty,
\]
since otherwise $L$, as defined in  (\ref{e:PiL}), would have been infinite.
\end{proof}

 The above theorem yields  that the sequence of measures $\left(\frac{Q^{\delta}_z(\Sigma^{\delta}_{p,z,y} >b)}{y-z} db\right)$ converges vaguely to a finite measure on $(0,\infty)$ as $y \rar z$. Thus, for any $f \in C^1_K(\bbR_+, \bbR)$, i.e. the space of continuously differentiable functions with a compact support, we have
\be
\lim_{y \dar z} \frac{Q^{\delta}_0\left[f\left(\Sigma^{\delta}_{p,0,y}\right)|\Sigma^{\delta}_{p,0,z}=a\right]-f(a)}{y-z}= \int_0^{\infty}\left\{f(a+b) -f(a)\right\}\pi(z,db).
\ee
In other words, letting $B(\bbR_+)$ denote the bounded Borel functions defined on $\bbR_+$, if we define the operator ${\cA}_z:B(\bbR_+)\mapsto B(\bbR_+)$ by setting
\[
{\cA}_z f (a):= \int_0^{\infty}\left\{f(a+b) -f(a)\right\}\pi(z,db) \qquad \mbox{for } f \in  C^1_K(\bbR_+, \bbR),
\]
then we see that the process $(M^f_y)_{y \geq 0}$ defined by
\[
M^f_y:=f(\Sigma^{\delta}_{p,0,y})-\int_0^y {\cA}_z f(\Sigma^{\delta}_{p,0,z})\,dz
\]
is a martingale with respect to the filtration $(\cF_{R_y})_{y \geq 0}$ whenever $f$ belongs to the domain of ${\cA}_z$ for all $z \geq 0$.  The form of the infinitesimal generator also reveals the fact that the increasing process $(\Sigma^{\delta}_{p,0,y})_{y \geq 0}$ is purely discontinuous, i.e. there is no interval $(a,b)$ in which it is continuous.
\begin{remark}It follows from the fact that $(R_y)_{y \geq 0}$ is left continuous that $(\Sigma^{\delta}_{p,0,y})_{y\geq 0}$ is a left-continuous process. However, in view of the above Feller property one can obtain a \cadlag version of it when we augment the filtration with the null sets.  Existence of a right-continuous version can also be independently verified by observing that
\[
\lim_{y \dar z}Q^{\delta}_0\left[\exp\left(-\lambda \Sigma^{\delta}_{p,0,y}\right)\right]=Q^{\delta}_0\left[\exp\left(-\lambda \Sigma^{\delta}_{p,0,z}\right)\right].
\]
\end{remark}

We will end this section by analysing a specific `time reversal' example. To this end let $L_x:=\sup\{t\geq 0:X_t=x\}$ and suppose that $\nu >0$ so that $Q^{\delta}_0(L_x < \infty)=1$ for all $x\geq 0$. We will consider the process $Z^{\delta}$ defined by
\be \label{e:zdelta}
Z^{\delta}_x:=\int_{L_{1-x}}^{L_1}X_s^p\,ds \qquad \forall x \in[0,1).
\ee
 In view of the well-known time reversal results for diffusions, see, e.g., Exercise 1.23 in Chap.~XI of \cite{ry}, the law of the process $(X_{L_1-t}, t < L_1)$ under $Q^{\delta}_0$ is identical to that of $(X_t, t < {R}_0)$ under $Q^{2-2\nu}_1$. Recall that $Q^{2-2\nu}_1(R_0<\infty)=1$. Thus, we can write
  \be
  Z^{\delta}_x=\int_0^{{R}_{1-x}}X_s^p \,ds, \qquad X=BESQ^{2-2\nu}(1). \label{e:zdelta2}
  \ee
  Note that the above equality is to be understood in the sense of
  equality between the laws of the processes. Due to the strong Markov
  property of $X$ we again have that $(Z^{\delta}_x)_{x \in [0,1)}$ is
  a Markov process with respect to the filtration
  $(\overset{\lar}{\cF}_x)_{x \in [0,1)}$ where
  $\overset{\lar}{\cF}_x:=\sigma(X_s; L_{1-x}\leq s \leq L_1)$.  Observe,
  more easily in view of (\ref{e:zdelta2}), that $Z^{\delta}$, too, has independent increments  rendering its Feller property in view of the arguments that led to the Feller property of $(\Sigma^{\delta}_{p,0,z})_{z \geq 0}$ at the beginning of this section. The next theorem will yield the form of the infinitesimal generator. Its proof will follow similar lines of the proof of Theorem \ref{t:ig1} so we will only give the details when it differs.
\begin{theorem}\label{t:ig2} Let $Z^{\delta}$ be as defined in
  (\ref{e:zdelta}) and suppose $\nu \in (0,1]$. Then, for every $x \in [0,1)$ there exists a decreasing function $\tilde{\pi}(x, \cdot)$  satisfying
\[
\int_0^{\infty}e^{-\frac{\lambda}{2} b}\tilde{\pi}(x,b)db=-\frac{2}{\lambda}\frac{w'(1-x)}{w(1-x)}, \qquad \lambda \in (0,\infty),
\]
where $w(x):=x^{\frac{\nu}{2}}K_{\frac{\nu}{p+1}}\left(\frac{\sqrt{\lambda}}{p+1}x^{\frac{p+1}{2}}\right)$.
Moreover,
\be
\lim_{y \dar x} \frac{Q^{\delta}_0\left[\exp\left(-\lambda Z^{\delta}_y\right)\big|Z^{\delta}_x=a\right]-e^{-\lambda a}}{y-x}=\int_0^{\infty}\left\{e^{-\lambda (a+b)} -e^{-\lambda a}\right\}\rho(x,db),
\ee
where $\rho(x, db):=-\tilde{\pi}(x, db)$ for $b \geq 0$. In particular,
\[
\int_0^1 b \rho(x,db)<\infty.
\]
\end{theorem}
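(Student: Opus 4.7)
The plan is to mirror the proof of Theorem~\ref{t:ig1} by using the time reversal identity (\ref{e:zdelta2}) to reduce the analysis of $Z^\delta$ to the Laplace transform formulas already established in Section~\ref{s:2}. First I would combine (\ref{e:zdelta2}) with the strong Markov property of $X = BESQ^{2-2\nu}(1)$ at $R_{1-x}$ to obtain, for $y > x$,
\[
Q^\delta_0\!\left[\exp\!\left(-\tfrac{\lambda}{2} Z^\delta_y\right)\,\big|\,Z^\delta_x = a\right] = e^{-\lambda a/2}\, Q^{2-2\nu}_{1-x}\!\left[\exp\!\left(-\tfrac{\lambda}{2}\Sigma^{2-2\nu}_{p,1-x,1-y}\right)\right].
\]
Because $\nu \in (0,1]$ makes the reversed index $-\nu$ lie in $[-1,0)$, and because $1-y < 1-x$, Corollary~\ref{c:jointl} (with $U \equiv 0$) identifies the inner expectation as the ratio $w(1-x)/w(1-y)$, where $w$ is precisely the function appearing in the statement of the theorem.

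The remainder then proceeds in parallel with the proof of Theorem~\ref{t:ig1}. Differentiating at $y = x^+$ and using the chain rule on $y \mapsto 1-y$ gives
\[
\lim_{y \dar x} \frac{Q^\delta_0[\exp(-\tfrac{\lambda}{2} Z^\delta_y)\mid Z^\delta_x = a] - e^{-\lambda a/2}}{y-x} = e^{-\lambda a/2}\, \frac{w'(1-x)}{w(1-x)}.
\]
Mirroring equations (\ref{e:lap})--(\ref{e:PiL}), integration by parts rewrites $w(1-x)/w(1-y)-1$ as $-(\lambda/2)\int_0^\infty e^{-\lambda b/2}\,Q^{2-2\nu}_{1-x}(\Sigma^{2-2\nu}_{p,1-x,1-y} > b)\,db$. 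The Green-function formula for $BESQ^{2-2\nu}$ killed at $1-y$ (with scale $z^{\nu}$ and speed measure proportional to $z^{-\nu}\,dz$) furnishes a closed expression for $Q^{2-2\nu}_{1-x}[\Sigma^{2-2\nu}_{p,1-x,1-y}]$; after normalising by this quantity, $(y-x)^{-1} Q(\Sigma > b)\,db$ becomes a family of probability measures that converges vaguely as $y \dar x$ to a limit of the form $c\,\eps_0 + \tilde\pi(x,b)\,db$ with $\tilde\pi(x,\cdot)$ decreasing. Setting $\rho(x,db) := -d\tilde\pi(x,b)$ then delivers both the representation and the generator formula.

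The main obstacle, as in Theorem~\ref{t:ig1}, will be checking that the atom at $0$ vanishes, that is,
\[
\lim_{\lambda \rar \infty} \frac{2}{\lambda}\left(-\frac{w'(1-x)}{w(1-x)}\right) = 0.
\]
With $w(z) = z^{\nu/2}K_{\frac{\nu}{p+1}}\!\left(\tfrac{\sqrt{\lambda}}{p+1}z^{(p+1)/2}\right)$, a direct differentiation combined with the asymptotic $K'_\alpha(z)/K_\alpha(z) \rar -1$ as $z \rar \infty$ (already used in the proof of Theorem~\ref{t:sbp}) shows that $w'(1-x)/w(1-x) = -c(x)\sqrt{\lambda}(1+o(1))$ for some $c(x) > 0$, so dividing by $\lambda$ yields the required zero limit. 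The moment condition $\int_0^1 b\,\rho(x,db) < \infty$ then follows because otherwise the Laplace transform of $\tilde\pi(x,\cdot)$ would blow up at positive $\lambda$. A minor extra check is required at the boundary $\nu = 1$ (where the reversed dimension is $0$), but both Corollary~\ref{c:jointl} and the Green-function calculation extend to this case by a limiting argument.
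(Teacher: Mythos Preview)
Your overall strategy—reduce to the reversed $BESQ^{2-2\nu}$ via (\ref{e:zdelta2}), invoke Corollary~\ref{c:jointl} to get the ratio $w(1-x)/w(1-y)$, and then mirror the proof of Theorem~\ref{t:ig1}—is exactly the paper's approach, and your treatment of the atom at $0$ (the large-$\lambda$ asymptotic of $K'_\alpha/K_\alpha$) is correct.

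There is, however, a genuine gap in the ``mirroring'' step. You write that the Green-function formula furnishes a closed expression for $Q^{2-2\nu}_{1-x}\bigl[\Sigma^{2-2\nu}_{p,1-x,1-y}\bigr]$ and then normalise by it to obtain probability measures. But this expectation is \emph{infinite} in the relevant range: the reversed process has index $-\nu\in[-1,0)$, it is recurrent with $\infty$ a natural boundary, and the Green function of the process killed at $R_{1-y}$ equals the constant $s(1-x)-s(1-y)$ for $z>1-x$; the resulting tail integral $\int^\infty z^{p-\nu}\,dz$ diverges whenever $p+1\ge\nu$ (in particular for every $p\ge 0$). The paper explicitly notes this and therefore does \emph{not} normalise to probability measures; it works with vague convergence of the unnormalised tail measures $\frac{1}{y-x}Q^{2-2\nu}_{1-x}(\Sigma>b)\,db$, which have infinite total mass.

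Because the limiting measure is infinite, you lose the automatic conclusion $\tilde\pi(x,\infty)=0$ that you rely on when integrating by parts (this was free in Theorem~\ref{t:ig1} precisely because the limit there was a probability measure). The paper fills this in with a second asymptotic computation you omit: it shows $\tilde\pi(x,\infty)=-\lim_{\lambda\to 0}w'(1-x)/w(1-x)=0$ using the small-argument behaviour of $K_\alpha$ via the representation (\ref{e:intK3}). Without this step the integration-by-parts identity and the generator formula are not justified. Your final remark about a ``limiting argument'' at $\nu=1$ is unnecessary; Corollary~\ref{c:jointl} already covers the index $-1$.
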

\begin{proof} First observe that in view of (\ref{e:zdelta2}) and the aforementioned independent increments property
\[
Q^{\delta}_0\left[\exp\left(-\frac{\lambda}{2} Z^{\delta}_y\right)\bigg|Z^{\delta}_x=a\right]=e^{-\frac{\lambda}{2}a}Q^{\beta}_{1-x}\left[\exp\left(-\frac{\lambda}{2}  \Sigma^{\beta}_{p,1-x,1-y}\right)\right],
\]
where $\beta=2-2\nu$. Therefore, the same arguments in the beginning of the proof of Theorem \ref{t:ig1} yields that the measures $\frac{Q^{\beta}_{1-x}(\Sigma^{\beta}_{p,1-x,1-y} >b) db}{y-x}$ converges vaguely
 to some probability measure, $\tilde{\Pi}(x,\cdot)$
on $[0,\infty)$ which satisfies
\be \label{e:PiL2}
L\left(\frac{\lambda}{2}\right):=\int_0^{\infty}e^{-\frac{\lambda}{2} b}\tilde{\Pi}(x,db)=-\frac{1}{\lambda}\frac{w'(1-x)}{w(1-x)}.
\ee
Moreover, the limiting measure is necessarily of the form
\[
c \eps_0(db) + \tilde{\pi}(x,b)\,db,
\]
where $\eps_0$ is the Dirac point mass at $0$, $c$ a nonnegative
constant, and $\tilde{\pi}(x,\cdot)$ is a decreasing function for
each $x$. Observe that since $Q^{\beta}_{1-x}(\Sigma^{\beta}_{p,1-x,1-y})=\infty$,  the measures $\frac{Q^{\beta}_{1-x}(\Sigma^{\beta}_{p,1-x,1-y} >b) db}{y-x}$ are not finite and neither is their vague limit. On the other hand, we can still conclude that $c=0$ since, in view of (\ref{e:MBasymp}),  one has
\[
\lim_{x \rar \infty}  \frac{1}{x}\frac{K'_{\alpha}(x)}{K_{\alpha}(x)}=2 \lim_{x \rar \infty}\frac{\log K_{\alpha}(x)}{x^2}=0.
\]
Next, integrating
(\ref{e:PiL2}) by parts yields
\be \label{e:exp1}
\tilde{\pi}(x,\infty)
-\int_0^{\infty}(1-e^{-\frac{\lambda}{2}b})\tilde{\pi}(x,db)=-\frac{w'(1-x)}{w(1-x)}.
\ee
Thus, due to the dominated convergence theorem,  taking the limit of
the above as $\lambda$ tends to $0$ yields
\[
\tilde{\pi}(x,\infty)=-\lim_{\lambda \rar 0}\frac{w'(1-x)}{w(1-x)}.
\]
Also note that using the asymptotic behaviour of $K$ near $0$  (\ref{e:MBasymp}) one has that
\[
\lim_{x\rar 0} x \frac{K'_{\alpha}(x)}{K_{\alpha}(x)}=\lim_{x\rar 0}\frac{\log K_{\alpha}(x)}{\log x}=-\alpha. 
\]
for any $\alpha\geq 0$.   This in
turn yields that
\[
\lim_{\lambda \rar
  0}\frac{w'(x)}{w(x)}=\frac{\nu}{2 x}+\frac{p+1}{2 x}\lim_{\lambda \rar
  0}\frac{K'_{\frac{\nu}{p+1}}\left(\frac{\sqrt{\lambda}}{p+1}x^{\frac{p+1}{2}}\right)}{K_{\frac{\nu}{p+1}}\left(\frac{\sqrt{\lambda}}{p+1}x^{\frac{p+1}{2}}\right)}\frac{\sqrt{\lambda}}{p+1}x^{\frac{p+1}{2}}=0,
\]
 and thus $\tilde{\pi}(x,\infty)=0$.
As in the proof of Theorem \ref{t:ig1}
\[
e^{-\frac{\lambda}{2}
    a}\frac{w'(1-x)}{w(1-x)}=\lim_{y \dar x}
\frac{Q^{\delta}_0\left[\exp\left(-\frac{\lambda}{2}
      Z^{\delta}_y\right)\big|Z^{\delta}_x=a\right]-e^{-\frac{\lambda}{2}
    a}}{y-x}.
\]
Thus, combining above with (\ref{e:exp1}) and plugging in the value of
$\tilde{\pi}(x,\infty)$ yield
\[
\lim_{y \dar x}
\frac{Q^{\delta}_0\left[\exp\left(-\frac{\lambda}{2}
      Z^{\delta}_y\right)\big|Z^{\delta}_x=a\right]-e^{-\lambda
    a}}{y-x}=\int_0^{\infty}\left\{e^{-\frac{\lambda}{2} (a+b)}
-e^{-\frac{\lambda}{2} a}\right\}\rho(x,db).
\]
\end{proof}
\begin{example} As an application of the above theorem consider the
  case when $\nu \in (0,1]$ and $\frac{\nu}{p+1}=\half$. Then, the associated
  $\tilde{\pi}$ is defined by
\[
\int_0^{\infty}e^{-\frac{\lambda}{2}
  b}\tilde{\pi}(x,b)db=\frac{1}{\sqrt{\lambda}}(1-x)^{\nu-1}
\]
in view of the explicit form for $K_{\frac{1}{2}}$. Thus, by inverting the above
transform, we have
\[
\tilde{\pi}(x,b)=(1-x)^{\nu-1}\frac{1}{\sqrt{2 \pi b}}.
\]
This reveals that the infinitesimal generator, $\tilde{A}_x$ of
$Z^{\delta}$ is defined by
\[
\tilde{A}_x=(1-x)^{\nu-1}\int_0^{\infty}\left\{f(a+b)-f(a)\right\} \frac{1}{2
    \sqrt{2 \pi b^3}}\,db
\]
for any $f$ in $C^1$ with a compact support. Consequently,
\[
f(Z_x^{\delta})-\int_0^x (1-y)^{\nu-1}\int_0^{\infty}\left\{f(Z^{\delta}_y+b)-f(Z^{\delta}_y)\right\} \frac{1}{2
    \sqrt{2 \pi b^3}}\,db\, dy
\]
is an $\overset{\lar}{\cF}$-martingale for such $f$.
\end{example}
\begin{example} Observe that although $Z^{\delta}$, or
  $(\Sigma^{\delta}_{p,0,y})$, is an increasing process with
  independent increments, it is not a {\em subordinator} (see \cite{bs} for a definition and further properties) since the
  increments are not stationary. However, in the above example, if one takes $\nu=1$, then one
  sees that $Z^{\delta}$ becomes time homogeneous, i.e. $Z^{\delta}$
  is a subordinator. Observe that $\nu=1$ implies $p=1$ in this
  framework. More precisely,
\[
Z^{\delta}_x=Z^{4}_x=\int_{L_{1-x}}^{L_1}X_s\,ds,
\]
where $X$ is $BESQ^4(1)$, is a subordinator. Moreover, Corollary
\ref{c:bht-} and (\ref{e:zdelta2}) yield
\[
Q^{\delta}_0\left(\exp\left(-\lambda
    Z^{4}_x\right)\right)=\exp\left(-\sqrt{\frac{\lambda}{2}}x\right).
\]
Thus,  $Z^{4}_x\eid T_{\frac{x}{2}}$, where $T_x$ is the first hitting
time of $x$ for a Brownian motion starting at $0$.
\end{example}
\section{Applications to finance}
Our aim in this section is to give some examples arising from some
financial models and discuss how the results from previous sections can be used to obtain prices of certain financial products.

As explained in Introduction the process  $X$ is commonly used in the finance literature to model interest rates. Suppose the spot interest rate is given by $X^p$ where $p>-1$ and $X$ is $BESQ^{\delta}(z)$ and consider the following {\em exotic} derivative on interest rates which pays one unit of a currency at time $R_y$ if the accumulated interest is less than $k$, i.e. $\Sigma^{\delta}_{p,z,y} \leq \log k$. This is an example of a digital option and its price, as usual in the Finance Theory, is given as an expectation of its discounted payoff:
\[
D(k;\delta,p,z,y)=Q^{\delta}_x\left[\chf_{[\Sigma^{\delta}_{p,z,y} \leq k]}\exp\left(-\Sigma^{\delta}_{p,z,y}\right)\right].
\]
On the other hand, if one computes the Laplace transform of $D(k;\delta,p,z,y)$, one obtains
\[
\int_0^{\infty}e^{-\mu k} D(k;\delta,p,z,y)\,dk=Q^{\delta}_z\left[\int_{\Sigma^{\delta}_{p,z,y}}^{\infty}e^{-\mu k}\exp\left(-\Sigma^{\delta}_{p,z,y}\right)\,dk\right]=Q^{\delta}_z\left[\exp\left(-(\mu+1)\Sigma^{\delta}_{p,z,y}\right)\right],
\]
which is at our disposal due to the results from Section \ref{s:2}. In
particular, the above identity implies
\[
\int_0^{\infty}e^{-\mu k} e^{k} D(k;\delta,p,z,y)\,dk=\int_0^{\infty}e^{-\mu u}  Q^{\delta}_z\left[\Sigma^{\delta}_{p,z,y} \in du \right],
\]
hence
\be \label{e:digital}
\int_0^u e^k
D(k;\delta,p,x,y)\, dk = Q^{\delta}_z\left[\Sigma^{\delta}_{p,z,y} \leq u\right], \qquad \forall u\geq 0.
\ee
 Moreover, once the function $D$ is determined by inverting the corresponding Laplace transform, one can also obtain the prices for put options written on the  accumulated interest. Indeed, for any $K>1$
\bea
P(\delta;z,y,
K)&:=&Q^{\delta}_z\left[\left(K-\exp\left(\Sigma^{\delta}_{p,z,y}\right)\right)^+\exp\left(-\Sigma^{\delta}_{p,z,y}\right)\right]
\nn \\
&=&
Q^{\delta}_z\left[\int_1^K \chf_{[\Sigma^{\delta}_{p,z,y} \leq \log
    k]}\,dk\exp\left(-\Sigma^{\delta}_{p,z,y}\right)\right]\nn \\
&=&\int_1^K  D(\log k;\delta,p,z,y)\,dk =\int_0^{\log K} e^u
D(u;\delta,p,z,y)\, du. \label{e:put}
\eea
Note that for $K=1$, i.e when the option is {\em at-the-money} since
the cumulative interest at $t=0$ is defined to be $1$, the put option is worthless. We can in fact get
how fast the option becomes worthless as $K$ approaches to
$1$. Indeed, comparing (\ref{e:put}) to (\ref{e:digital}) yields  the following
asymptotics for the option value in view of Theorem \ref{t:sbp}:
\be \label{e:smallKput}
\lim_{K \dar 1} \log K \log P(\delta;z,y, K)=-\frac{1}{2(p+1)^2}\left(z^{\frac{p+1}{2}}-y^{\frac{p+1}{2}}
\right)^2.
\ee
The above expression tells us how small the option price becomes when
the option is slightly {\em in-the-money}, i.e. when $K$ is very close
to $1$.

Next, assume $y < z$ and consider another type of a put option on the maximum of the short rate with maturity $R_y$ and payoff $(K-\max_{t \leq R_y}X_t)^+$ for $K>z$. The price of this option equals
\bean
Q^{\delta}_z\left[\exp\left(-\Sigma^{\delta}_{p,z,y}\right)\int_0^K \chf_{[\max_{t \leq R_y}X_t<a]}\,da\right]&=& Q^{\delta}_z\left[\exp\left(-\Sigma^{\delta}_{p,z,y}\right)\int_x^K\chf_{[R_a >R_y]}\,da \right]\\
&=&\frac{u_0(\sqrt{z})}{u_0(\sqrt{y})}\left(\frac{z}{y}\right)^{-\frac{\nu}{2}} \int_x^K\frac{\tilde{s_0}(z)-\tilde{s}(a)}{\tilde{s_0}(y)-\tilde{s_0}(a)}\,da,
\eean
in view of Corollary \ref{c:jmS} and Proposition \ref{p:jmS}, where
the pair $(u_0, \tilde{s_0})$ is computed by setting $\lambda=2$.

Finally, if one is interested in pricing an Asian option on the short
rate until time $R_y$, it suffices to use the identity
\[
Q^{\delta}_z\left[\left(K-\frac{\Sigma^{\delta}_{p,z,y}}{R_y}\right)^+\exp\left(-\Sigma^{\delta}_{p,z,y}\right)\right]=Q^{\delta}_z\left[\exp\left(-\Sigma^{\delta}_{p,z,y}\right)\int_0^K
      \chf_{[\Sigma^{\delta}_{p,z,y}< k R_y]}\,dk \right]
\]
and invert the joint Laplace transform of $R_y$ and
$\Sigma^{\delta}_{p,z,y} $ obtained in  Section \ref{s:2}.

\end{document}